\DeclareMathOperator*{\esssup}{ess\,sup}
\newtheorem{theorem}{Theorem}[section]
\newtheorem{corollary}[theorem]{Corollary}
\newtheorem{lemma}[theorem]{Lemma}
\newtheorem{proposition}[theorem]{Proposition}
\theoremstyle{definition}
\newtheorem{definition}{Definition} 
\newtheorem{remark}[theorem]{Remark}
\numberwithin{equation}{section}
\newcommand{\dx}{\,\mathrm d }
\newcommand{\Om}{\Omega}
\newcommand{\reale}{\mathbb{R}}
\newcommand\rN{\reale^N}
\DeclareMathOperator{\divergenza}{div}
\newcommand{\spp}{ u_n }
\newcommand{\dual}{L^{p^\prime}\left(0,T,W^{-1,p^\prime}(\Omega)\right)}
\newcommand{\pspace}{L^p\left(0,T,W^{1,p}_0(\Omega)\right)}
\newcommand{\parspbound}{L^\infty\left(0,T,L^2(\Omega)\right)}
\newcommand{\parspcont}{C^0\left([0,T],L^2(\Omega)\right)}
\newcommand{\pspacegen}{L^p\left(0,T,X\right)}
\newcommand{\pspacegeninfty}{L^\infty\left(0,T,X\right)}
\newcommand{\parspgencont}{C^0\left([0,T],X\right)}
\renewcommand{\le}{\leqslant}
\renewcommand{\ge}{\geqslant}
\def\Xint#1{\mathchoice
{\XXint\displaystyle\textstyle{#1}}%
{\XXint\textstyle\scriptstyle{#1}}%
{\XXint\scriptstyle
\scriptscriptstyle{#1}}%
{\XXint\scriptscriptstyle
\scriptscriptstyle{#1}}%
\!\int}
\def\XXint#1#2#3{{
\setbox0=\hbox{$#1{#2#3}{\int}$}
\vcenter{\hbox{$#2#3$}}\kern-.5\wd0}}
\def\dashint{\Xint-}
\newcommand{\average}[3]{ \dashint_{#1} #2 \dx{#3}} 
\begin{document}
\title[Evolution problems with singular coefficients]{Nonlinear evolution problems \\ with singular coefficients  
in the lower order terms}


\author{Fernando Farroni}
\address{Fernando Farroni \\ Dipartimento di Matematica e Applicazioni R. Caccioppoli \\ Universit\`{a} 
degli Studi  di Napoli Federico II \\ Complesso Monte S. Angelo, via Cinthia \\ I-80126 Napoli, Italy}
\email{fernando.farroni@unina.it}
\author{Luigi Greco}
\address{Luigi Greco \\ Dipartimento di Ingegneria elettrica e delle Tecnologie dell'Informazione \\ Universit\`{a} 
degli Studi  di Napoli Federico II \\ Piazzale Tecchio, 80 \\ I-80126 Napoli, Italy}
\email{luigreco@unina.it}
\author{Gioconda Moscariello}
\address{Gioconda Moscariello \\ Dipartimento di Matematica e Applicazioni R. Caccioppoli \\ Universit\`{a} degli Studi di Napoli Federico II \\ Complesso Monte S. Angelo, via Cinthia \\ I-80126 Napoli, Italy}
\email{gmoscari@unina.it}
\author{Gabriella Zecca}
\address{Gabriella Zecca \\ Dipartimento di Matematica e Applicazioni R. Caccioppoli \\ Universit\`{a} 
degli Studi  di Napoli Federico II \\ Complesso Monte S. Angelo, via Cinthia \\ I-80126 Napoli, Italy}
\email{g.zecca@unina.it}

\keywords{Parabolic equations, time behavior}
\subjclass{35K55, 35K61}

\thanks{The authors are members of Gruppo Nazionale per l'Analisi Matematica, la Probabilit\`a e le loro Applicazioni (GNAMPA) of INdAM. The research of G.M. has been partially supported by the National Research Project PRIN \lq\lq Gradient flows, Optimal Transport and Metric Measure Structures'', code~2017TEXA3H}
 
\begin{abstract}
We consider a Cauchy--Dirichlet problem for a quasilinear second order parabolic equation with lower order term 
driven by a singular coefficient. 
We establish an existence result to such a problem and we   describe
 the time behavior of the solution in the case of the infinite–time horizon.
\end{abstract}

\date{\today}

\maketitle

\tableofcontents
\addtocontents{toc}{\protect\thispagestyle{empty}}
\pagenumbering{gobble}

%


\section{Introduction}
The aim of this paper is to study 
the  following 
Cauchy--Dirichlet problem 
\begin{equation}\label{mp}
\left\{
\begin{array}{rl}
&  
u_t     - 
\divergenza  
A(x,t,u, \nabla u)  
 = 
f
\qquad\text{in $  \Omega_{T }$},
  \\
\, \\
&   u  = 0    
\qquad\text{on $\partial \Omega    \times (0,T)  $}
, \\
\, \\
& u (\cdot,0) = u_0     
\qquad\text{in $  \Omega    $}. \\
\end{array}
\right.
\end{equation}
Here  $\Omega$
is  a bounded open 
subset  of $\mathbb R^N$, with $N \ge2$. Correspondingly,   $\Omega_T:=\Omega \times (0,T)$ is 
the parabolic cylinder over $\Omega$ of height $T>0$.  We adopt a usual notation $   u _t$  for 
the time derivative and  $\nabla u$ for gradient   with respect to the space variable. 
We let $  {2N}/{(N+2)}<p < N$. 
For the data related to problem \eqref{mp}, we consider
\begin{align}
& u_0\in L^2 (\Omega)
\\
& f \in \dual
\end{align}

\medskip

We assume that 
$$A=A(x,t,u,\xi) \colon \Omega_T \times \mathbb R \times \rN \rightarrow \rN$$ is a 
Carath\'eodory function (i.e. $A$ is measurable w.r.t. $(x,t)\in \Omega_T$ for all $(u,\xi)\in \mathbb R \times \rN$ 
and continuous w.r.t. $(u,\xi)\in \mathbb R\times \rN$ for a.e.
$(x,t)\in \Omega_T$) satisfying 
the following monotonicity and boundedness conditions 
\begin{align}
& \label{1.3}
A(x,t,u, \xi)\cdot \xi \ge \alpha |\xi|^p - \left(
b(x,t) |u|
\right)^p - H(x,t)
\\
&\label{1.4}  \left[ A(x,t,u, \xi)
 - A(x,t,u, \eta) \right] \cdot (\xi-\eta)
>0  \qquad \text{if $\xi \neq \eta$} 
\\
& \label{1.5} \left|A(x,t, u,\xi)\right|  \le \beta |\xi|^{p-1} +  \left(
b(x,t) |u|
\right)^{p-1} + K(x,t)
\end{align}
for a.e.  $(x,t)\in \Omega_T$ and for any $u\in \mathbb R$ and $\xi,\eta \in \rN$.
Here  $\alpha,\beta$ are positive constants, while 
$H$, $K$ and $b$ are nonnegative measurable functions defined on $\Om_T$
such that
$H \in L^1(\Omega_T)$,
$ K\in L^{p^\prime}(\Omega_T)$
and 
\begin{equation}\label{b}
b 
\in 
L^\infty 
\left(
0,T, L^{N,\infty} (\Omega)
\right)
\end{equation} 
Here $L^{N,\infty} (\Omega)$ denotes the Marcinkiewicz space (see Section \ref{funsps} for the definition).

\medskip

Taking into account all the assumptions above, we consider the following notion of solution.
\begin{definition}\label{defsolu}
A solution to problem \eqref{mp} 
is a function 
\[
u \in    \parspcont  \cap \pspace
\]
such that for a.e. $t\in (0,T)$ we have
\begin{equation}\label{ds}
-\int_0^t \int_\Omega u \varphi_t \,dxds
+
\int_0^t \int_\Omega A(x,s,u,\nabla u)\cdot \nabla \varphi \,dxds
=\int_\Omega u_0 \varphi (x,0)\,\dx x
+
\int_0^t 
 \langle f,  \varphi \rangle 
\,\dx s
\end{equation} 
for every $\varphi \in C^\infty(\bar \Omega_T)$ such that ${\rm supp }\,\varphi \subset [0,T)\times \Omega$.

Moreover, if  $ u \in C_{\rm loc}^0 
\left(\left[0,\infty\right),L^2(\Om)
\right)\cap L_{\rm loc}^p\left(
0,\infty,W^{1,p}_0 (\Omega)
\right)$
and the above holds true for all $T>0$, then $u$ is called a solution to 
problem \eqref{mp} in $\Om \times (0,\infty)$.
\end{definition}
For the notation related to parabolic type function spaces such as $\pspace$ or similar, we refer the reader to Section \ref{pre} below. Above and
throughout the paper, for two vectors $\xi,\eta\in\mathbb R^N$ we denote by
 $\xi\cdot\eta$ their scalar 
product and  we denote by $\langle\cdot,\cdot\rangle$  
the duality between $W^{-1,p^\prime}(\Omega)$ and $W_0^{1,p}(\Omega)$. 

\medskip

Model equation that 
we consider in this context is 
\begin{equation}\label{1.9}
u_t     - 
\divergenza   \left[
|\nabla u |^{p-2} \nabla u  + 
|u|^{p-2}u
\left(
\mu \frac {h(t)} {|x|} + b_0(x,t)    \right)    \frac x {|x|}
\right]
 = 
f
\end{equation} 
where 
$\mu >0$, $h \in L^\infty(0,T)$ and 
%
%
$b_0 \in L^\infty(\Om_T)$.

\medskip

When $p=2$, 
the linear homogeneous equation 
in
\eqref{mp} 
describes the evolution of some Brownian motion and it is also known as 
Fokker--Planck equation
(see e.g. \cite{CLLP,Porretta} and the references therein).    
We remark that the boundedness of the 
growth coefficient $b(x,t)$
is too restrictive in many applications as, for instance, in the case 
of the diffusion model for 
semiconductor devices
(see e.g. \cite{Fang Ito}).
 On the other hand, a low
  integrability assumption 
 in $\Om_T$ for the  term $b=b(x,t)$ 
 does not guarantee the existence of a distributional solution in the sense of definition \eqref{ds}. In this case, other definitions of solutions have been introduced
 (see e.g. \cite{
Porretta}). 
Assumption \eqref{b}, in view of 
Sobolev embedding theorem (see Theorem \ref{lorentz} below), 
guarantees that 
\[
\int_{\Om_T} A(x,t,u,\nabla u )\cdot \nabla u \dx x \dx t < \infty
\] 
that is,  a solution in the sense of Definition 
\ref{defsolu}
has  finite energy.

\medskip

Our existence result reads as follows.  
\begin{theorem} \label{ttt}
Let assumptions 
\eqref{1.3},
\eqref{1.4},   
\eqref{1.5} and \eqref{b}
be in charge.   
Assume further that 
\begin{equation}\label{dist}
\mathscr D_b :=
\esssup_{0<t<T } \,\,{\rm dist}_{L^{N,	\infty}(\Omega)} \left(
b,{L^{ \infty}(\Omega)}
\right)
<
 \frac {\alpha^{1/p}}{S_{N,p}} 
\end{equation}
Then
problem  \eqref{mp} admits  a solution.  
\end{theorem}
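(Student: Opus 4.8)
The plan is the classical compactness route for quasilinear parabolic problems, the one genuinely delicate point being the uniform a priori estimate, which is where hypothesis \eqref{dist} is used in an essential way. For $n\in\naturale$ let $T_n$ be the truncation at height $n$ and put $A_n(x,t,u,\xi):=A(x,t,T_nu,\xi)$. Then $A_n$ is a Carath\'eodory function, monotone in $\xi$ by \eqref{1.4}, and, since $|T_nu|\le n$, conditions \eqref{1.3} and \eqref{1.5} become for $A_n$ the structure conditions with the $n$-dependent but \emph{fixed} data $(nb)^{p}$ and $(nb)^{p-1}$; these belong respectively to $L^1(\Omega_T)$ and to $L^{p'}(\Omega_T)$, because $b\in L^\infty(0,T,L^{N,\infty}(\Omega))$ and $p<N$ force $b^{p}\in L^1(\Omega_T)$, $b^{p-1}\in L^{p'}(\Omega_T)$. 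Hence $u\mapsto-\divergenza A_n(x,t,u,\nabla u)$ is a bounded, coercive, pseudomonotone operator from $\pspace$ into $\dual$, and the classical existence theorem for evolution equations governed by such operators gives, for the data $u_0\in L^2(\Omega)$ and $f\in\dual$, a solution $u_n\in\pspace\cap\parspcont$ with $\partial_tu_n\in\dual$.

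\textbf{The uniform estimate.} Testing the $n$-th equation with $u_n$, and using \eqref{1.3} together with $|T_nu_n|\le|u_n|$, gives for a.e.\ $t$
\begin{equation*}
\frac12\,\frac{d}{dt}\|u_n(t)\|_{L^2(\Omega)}^2+\alpha\int_\Omega|\nabla u_n|^p\dx x\le\int_\Omega\bigl(b\,|u_n|\bigr)^p\dx x+\int_\Omega H\dx x+\langle f,u_n\rangle .
\end{equation*}
Fix $\eta>0$ and, using the characterisation of the distance to $L^\infty(\Omega)$ in the Marcinkiewicz space (truncating $b$ at a high level, slicewise in $t$), split $b=b^{(1)}+b^{(2)}$ with $b^{(1)}\in L^\infty(\Omega_T)$ and $\esssup_{0<t<T}\|b^{(2)}(\cdot,t)\|_{L^{N,\infty}(\Omega)}<\mathscr D_b+\eta$. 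From $(s+\sigma)^p\le(1+\delta)^{p-1}\sigma^p+C_\delta s^p$, H\"older's inequality in Lorentz spaces and the sharp Sobolev embedding $W_0^{1,p}(\Omega)\hookrightarrow L^{p^*,p}(\Omega)$ with constant $S_{N,p}$ (Theorem \ref{lorentz}),
\begin{equation*}
\int_\Omega\bigl(b\,|u_n|\bigr)^p\dx x\le(1+\delta)^{p-1}(\mathscr D_b+\eta)^p S_{N,p}^p\int_\Omega|\nabla u_n|^p\dx x+C_\delta\,\|b^{(1)}\|_{L^\infty(\Omega_T)}^p\int_\Omega|u_n|^p\dx x .
\end{equation*}
By \eqref{dist} one may pick $\eta,\delta$ so that $(1+\delta)^{p-1}(\mathscr D_b+\eta)^pS_{N,p}^p<\alpha$, so the first term on the right is absorbed by $\alpha\int_\Omega|\nabla u_n|^p$ (together with a further small fraction spent on $\langle f,u_n\rangle$); the residual lower order term is then handled by the Gagliardo--Nirenberg inequality and Gronwall's lemma. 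One obtains bounds for $u_n$ in $\pspace\cap\parspbound$ independent of $n$; by \eqref{1.5} the fluxes $A_n(x,t,u_n,\nabla u_n)$ are consequently bounded in $L^{p'}(\Omega_T,\mathbb R^N)$ and $\partial_tu_n$ is bounded in $\dual$, uniformly in $n$.

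\textbf{Passage to the limit.} By Aubin--Lions, along a subsequence $u_n\to u$ in $L^p(\Omega_T)$ and a.e.\ in $\Omega_T$, $u_n\rightharpoonup u$ in $\pspace$ and weakly-$*$ in $\parspbound$, and $A_n(x,t,u_n,\nabla u_n)\rightharpoonup\chi$ in $L^{p'}(\Omega_T,\mathbb R^N)$. Passing to the limit in \eqref{ds} shows that $u$ satisfies \eqref{ds} with $\chi$ in place of $A(x,t,u,\nabla u)$, and that $u(\cdot,0)=u_0$; moreover $u\in\parspcont\cap\pspace$ since $\partial_tu=\divergenza\chi+f\in\dual$. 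It remains to prove $\chi=A(x,t,u,\nabla u)$, by the usual Minty--Browder/Leray--Lions argument in its parabolic form: from the chain rule $\int_0^T\langle\partial_tu_n,u_n\rangle=\tfrac12\|u_n(T)\|_{L^2}^2-\tfrac12\|u_0\|_{L^2}^2$, its analogue for $u$ (legitimate after a time-regularisation of $u$), $\langle f,u_n\rangle\to\langle f,u\rangle$ and the weak lower semicontinuity of $\|\cdot\|_{L^2}$ at $t=T$, one gets $\limsup_n\int_{\Omega_T}A_n(x,t,u_n,\nabla u_n)\cdot\nabla u_n\le\int_{\Omega_T}\chi\cdot\nabla u$. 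Since $T_nu_n\to u$ a.e.\ and $A(x,t,T_nu_n,\nabla w)\to A(x,t,u,\nabla w)$ in $L^{p'}(\Omega_T)$ for every fixed $w\in\pspace$, the strict monotonicity \eqref{1.4} yields $\int_{\Omega_T}\bigl(\chi-A(x,t,u,\nabla w)\bigr)\cdot(\nabla u-\nabla w)\ge0$ for all such $w$; taking $w=u\mp\lambda\psi$ and letting $\lambda\downarrow0$ forces $\chi=A(x,t,u,\nabla u)$, so $u$ is a solution in the sense of Definition \ref{defsolu}.

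\textbf{Main obstacle.} The heart of the proof is the uniform estimate: the threshold $\alpha^{1/p}/S_{N,p}$ in \eqref{dist} is precisely what makes the singular part $(b^{(2)}|u_n|)^p$ absorbable through the sharp Lorentz--Sobolev inequality, and the splitting of $b$ must rest on a genuine, $t$-uniform characterisation of the distance to $L^\infty$ in $L^{N,\infty}(\Omega)$. The identification $\chi=A(x,t,u,\nabla u)$ is standard in spirit but technically demanding, because the time derivative carries only dual regularity and the parabolic chain rule has to be justified by regularising in time.
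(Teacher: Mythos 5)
Your approximation scheme (truncating the argument $u$ in $A$) differs from the paper's, which truncates the coefficient $b$ (via $A_n(x,t,u,\xi)=A(x,t,\theta_n u,\xi)$ with $\theta_n=\mathcal T_nb/b$) and solves each approximating problem by Leray--Schauder after freezing the $u$-dependence. Your route is a legitimate and arguably cleaner alternative at that stage, since truncating $u$ makes $-\divergenza A_n(x,t,\cdot,\nabla\cdot)$ a standard Leray--Lions operator with $L^{p'}$ data, so no fixed-point argument is needed.

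The genuine gap is in the uniform a priori estimate, and it concerns precisely the range $p>2$. After splitting $b=b^{(1)}+b^{(2)}$ and absorbing the singular part by the Lorentz--Sobolev inequality --- which is the same idea as the paper --- you are left with the residual
\begin{equation*}
C_\delta\,\|b^{(1)}\|_{L^\infty(\Omega_T)}^p\int_\Omega|u_n|^p\dx x .
\end{equation*}
Gagliardo--Nirenberg plus Young gives $\|u_n\|_{L^p(\Omega)}^p\le\varepsilon\|\nabla u_n\|_{L^p(\Omega)}^p+C_\varepsilon\|u_n\|_{L^2(\Omega)}^p$, and after absorbing $\varepsilon\|\nabla u_n\|_p^p$ you face the differential inequality $y'\le C\,y^{p/2}+g$ for $y(t)=\|u_n(t)\|_{L^2(\Omega)}^2$. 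For $p>2$ this is superlinear with the wrong sign: solutions of the comparison ODE may blow up before time $T$, so Gronwall does not close, and the dissipative term $-c\|\nabla u_n\|_p^p$ cannot save you because the constant $C$ grows with the truncation level of $b$ (i.e.\ with $\|b^{(1)}\|_\infty$), whereas the Sobolev constant giving $\|\nabla u_n\|_p^p\gtrsim y^{p/2}$ is fixed. The paper's way around this is the key step your outline omits: a weak-type estimate (Lemma \ref{ldlam}) obtained by testing the equation with the \emph{bounded} Lipschitz function $\phi(w)$, which yields the $n$-independent, $t$-uniform bound $\sup_t|\{\,|u(\cdot,t)|>k\,\}|\le\Psi(k)M_0$ with $\Psi(k)\to0$. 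This controls the distribution function of $u_n$, and in Proposition \ref{mprop} the bounded-coefficient contribution $\|\mathcal T_mb\,|u|\,\|_{L^p}^p$ is split at level $k$: on $\{|u|>k\}$, H\"older with $\chi_{\{|u|>k\}}\in L^{N,\infty}$ and the Lorentz--Sobolev inequality give the factor $C\Psi(k)^{p/N}\|\nabla u\|_p^p$, which is a genuinely small fraction of the gradient term; on $\{|u|\le k\}$ one is left with the harmless constant $k^p\|b\|_{L^p(\Omega_T)}^p$. The absorption is thus done in a single inequality, with no Gronwall and no superlinear ODE. Your proof works as written for $2N/(N+2)<p\le2$, but for $p>2$ it needs this extra weak-type ingredient (or an equivalent), and there is no substitute for it in your sketch.

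A secondary point: in the Minty step you use $A(x,t,\mathcal T_nu_n,\nabla w)\to A(x,t,u,\nabla w)$ strongly in $L^{p'}(\Omega_T)$ for fixed $w$. By \eqref{1.5} this requires dominating $(b|\mathcal T_nu_n|)^{p-1}$ in $L^{p'}$ uniformly in $n$, which is not immediate since $b$ is unbounded and $b|u_n|$ converges only weakly in $L^p$. The paper avoids this by testing with the bounded Lipschitz function $\gamma(u_n-u)$, whose derivative produces the factor $(1+|u_n-u|^p)^{-1}$ that supplies exactly the needed domination for Vitali's theorem. In your variant the same effect can be recovered by splitting $b$ once more, but it should be said.
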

The constant $S_{N,p}$ is the one of 
Sobolev embedding theorem in Lorentz spaces (see 
Theorem 
\ref{lorentz} below). In \eqref{dist}  
${\rm dist}_{L^{N,	\infty}(\Omega)} \left(
b,{L^{ \infty}(\Omega)}
\right)$
denotes the distance from bounded functions of the function $b$  with respect to the $L^{N,\infty}$--norm (see formula \eqref{distbis}  below
for   the   definition).

\medskip

Condition \eqref{dist}
, for the first time   introduced 
 in \cite{GGM}
and in \cite{GMZ2},     
does not 
imply a smallness condition on the norm.
Indeed, in the example 
\eqref{1.9} it just gives a bound on the constant $\mu$.
In particular, 
\eqref{dist} 
holds true 
whenever $b\in L^\infty 
\left(
0,T ,  L^{N,q} (\Omega)
\right)
 $
for $1\le q < \infty$.   
Here $L^{N,q} (\Omega)$ denotes the Lorentz space
(see Section \ref{funsps} for the definition).
We don't know if condition \eqref{dist}
is optimal in our framework. Nevertheless, 
in  the elliptic counterpart of Theorem  \ref{ttt} (considered in  \cite{FMGZ})
such a condition turns to be optimal at least for $p=2$.  
%
%
%
%
%
%

\medskip

We also study the behavior on time of a weak solution 
given in Theorem \ref{ttt}.
More precisely, we estimate on time the $L^2$--norm of $u$ with the solution of a Cauchy problem related to a o.d.e.   (see Theorem \ref{asyt1}). As  consequence we provide estimates that highlight the different decay behavior   as the exponent $p$ varies when $T=\infty$. The presence of the lower order term does not affect the decay to zero of the $L^2$-norm as $T$ goes to infinity (Corollary \ref{finalcor} below). Recent results about the time decay for solutions to   parabolic problems in absence of the lower order term can be found in \cite{Moscariello-Frankowska, 
Moscariello-Porzio,Por}
and the references therein.

\medskip

The novelty of the paper lies on the fact that 
 in Theorem  \ref{ttt}  above
and Theorem \ref{asyt1} below we consider a family of operators  not coercive with 
a singular growth coefficient in the lower order term. We recall that bounded functions are not dense in the 
Marcinkiewicz space $L^{N,\infty}(\Omega)$. In order to find a  solution to   \eqref{mp}, 
the 
main tool is  an apriori estimate that could have interest by itself (see Proposition \ref{mprop}).  Thanks to  Leray--Schauder fixed point theorem,  we first solve the problem  when $b(x,t)$ is bounded. 
%
%
%
Then,  we obtain a solution to 
 \eqref{ttt} as a limit of a sequence of solutions to   suitable  approximating problems. 
A  solution of Theorem \ref{ttt} satisfies an energy equality and then by using a recent  result of \cite{Moscariello-Frankowska} we are able to
describe its asymptotic behavior.
\section{Preliminary results}\label{pre}
\subsection{Basic notation}
We adopt the usual symbols $\lesssim$ and $\gtrsim$ for inequalities which holds true up to not influent constants (in our case   typically depending   on $N$, $p$,
$\alpha$ and $\beta$). 

We will   denote by $ C$ (or by similar symbols such as $C_1, C_2,\dots$)  
a generic positive constant, which may possibly
vary from line to line. 
The dependence of $C$ upon various parameters     will be highlighted 
 in parentheses, adopting a notation of the type $C(\cdot,\dots,\cdot)$.

\subsection{Function spaces}  \label{funsps}
Let $\Om$ be a bounded domain in $\mathbb R^N$.
Given $1<p<\infty$ and $1\leqslant q<\infty$, the Lorentz space $L^{p,q}(\Om)$ consists of all measurable functions $f$ defined on $\Omega$ for which the quantity
\begin{equation}\label{norma Lorentz}
\|f\|_{p,q}^q:=p\int_{0}^{\infty} \left[\lambda_f(k)\right]^{\frac{q}{p}}k^{q-1}\dx k
\end{equation}
is finite, where $\lambda_f(k):=  \left| \left\{ x\in \Om: |f(x)|>k \right\}  \right|$
is the distribution function of $f$. Note that $\|\cdot\|_{p,q}$  is equivalent to a norm and $L^{p,q}(\Om)$ becomes a Banach space when endowed with it (see \cite{bs,O}). For $p=q$, the Lorentz space
$L^{p,p}(\Omega)$ reduces to the Lebesgue space $L^p(\Omega)$. For $q=\infty$, the class $L^{p,\infty}(\Omega)$ consists of all measurable functions $f$ defined on $\Omega$ such that
\begin{equation*}\label{2.1}
\|f\|^p_{p,\infty}:=\sup_{k>0}k^{ p} \lambda_f(k)< \infty
\end{equation*}
and it coincides with the Marcinkiewicz class, weak-$L^p(\Omega)$.

For Lorentz spaces the following inclusions hold
\[
L^r (\Om)\subset L^{p,q}(\Om)\subset  L^{p,r} (\Om) \subset L^{p,\infty}(\Om)\subset L^q(\Om),
\]
whenever $1\leqslant q<p<r\leqslant \infty.$ Moreover, for $1<p<\infty$, $1\leqslant q\leqslant \infty$ and $\frac 1 p+\frac 1 {p'}=1$, $\frac 1 q+\frac 1 {q'}=1$, if $f\in L^{p,q}(\Omega)$, $g\in L^{p',q'}(\Omega)$ we have the H\"{o}lder--type inequality
\begin{equation}\label{holder}
\int_{\Omega}|f(x)g(x)|\dx x \leqslant \|f\|_{p,q}\|g\|_{p',q'}.
\end{equation}

As it is well known, $L^\infty(\Om)$ is not dense in $L^{p,\infty}(\Om)$. 
For a function $f \in 
L^{p,\infty} (\Om)$ we define
\begin{equation}\label{distbis}
{\rm dist}_{L^{p,\infty} (\Om) } (f,L^\infty (\Om))  =\inf_{g\in L^\infty(\Om)} \|f-g\|_{L^{p,\infty}(\Om)}.
\end{equation}
In order to characterize the distance in \eqref{dist}, 
we introduce for all  $k>0$ the truncation operator at heights $k$, namely
\begin{equation*}\label{201206272}
\mathcal T_k y :=\frac{y}{|y|}\min\{|y|, k\} \qquad \text{for $y \in \mathbb R\,$}.
\end{equation*}
It is easy to verify that
\begin{equation}\label{distlim}
\lim_{k\to\infty}\|f-\mathcal T_kf\|_{p,\infty} 
=
{\rm dist}_{L^{p,\infty} (\Om) } (f,L^\infty (\Om))\,.
\end{equation}
Clearly, for $1\leqslant q<\infty$ 
any function in $L^{p,q}(\Omega)$ has  vanishing distance 
to $L^\infty(\Omega)$. Indeed, $L^\infty(\Omega)$ is dense in $L^{p,q}(\Omega)$, the latter being continuously embedded into $L^{p,\infty}(\Omega)$. 
\par
Assuming that $0\in\Omega$, a typical  element of $L^{N,\infty}(\Om)$ is $b(x)=B/|x|$, with $B$ a positive constant. An elementary calculation shows that
\begin{equation}\label{esempio-dist}
{\rm dist}_{L^{N,\infty} (\Om) } (b,L^\infty (\Om))=B\,\omega_N^{1/N}
\end{equation}
where $\omega_N$ stands for the Lebesgue measure of the unit ball of $\rN$.  

\medskip

The Sobolev embedding theorem in Lorentz spaces \cite{O,A} 
reads as
\begin{theorem}\label{lorentz}
Let us assume that $1<p<N$, $1\leqslant q\leqslant p$, then every function $u\in W_0^{1,1}(\Omega)$ verifying $|\nabla u|\in L^{p,q}(\Om)$ actually belongs to $L^{p^*,q}(\Om)$, where $p^*:=\frac{Np}{N-p}$ is the Sobolev exponent of $p$
and
\begin{equation}\label{Sobolevin}
\|u\|_{p^*,q}\leqslant S_{N,p}\|\nabla u\|_{p,q}
\end{equation}
where $S_{N,p}$ is the Sobolev constant given by $S_{N,p}=\omega_N^{-1/N}
    \frac{p }{ N-p  }  $.
\end{theorem}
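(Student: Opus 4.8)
The plan is to deduce \eqref{Sobolevin} from three ingredients --- Schwarz symmetrization, a one--dimensional Hardy inequality in Lorentz spaces, and the sharp isoperimetric inequality, the last being what produces the constant $\omega_N^{-1/N}\frac{p}{N-p}$. First I would reduce to a radial function: replacing $u$ by $|u|$ changes neither $|\nabla u|$ nor the distribution function of $u$, so assume $u\ge0$. Let $\Omega^{\#}$ be the ball centred at the origin with $|\Omega^{\#}|=|\Omega|$ and let $u^{\#}\in W_0^{1,1}(\Omega^{\#})$ be the radially nonincreasing rearrangement (Schwarz symmetrization) of $u$. Since the quantity in \eqref{norma Lorentz} depends only on the distribution function, $\|u\|_{L^{p^*,q}(\Omega)}=\|u^{\#}\|_{L^{p^*,q}(\Omega^{\#})}$; and by the P\'olya--Szeg\H{o} inequality in the rearrangement--invariant space $L^{p,q}$ --- which rests on the isoperimetric inequality $P(E)\ge N\omega_N^{1/N}|E|^{1-1/N}$ together with the coarea formula (and, for non--Lebesgue $q$, a pseudo--rearrangement of $|\nabla u|$) --- one has $\|\nabla u^{\#}\|_{L^{p,q}(\Omega^{\#})}\le\|\nabla u\|_{L^{p,q}(\Omega)}$. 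Hence it suffices to prove \eqref{Sobolevin} for $u=u^{\#}$.

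Write $u^{\#}(x)=\psi(|x|)$ with $\psi$ nonincreasing on $(0,R)$, $\psi(R)=0$, $R=(|\Omega|/\omega_N)^{1/N}$, so that $|\nabla u^{\#}(x)|=|\psi'(|x|)|$; the change of variable $s=\omega_N r^N$ gives $u^*(s)=\psi((s/\omega_N)^{1/N})$ and, with $v(\sigma):=|\psi'((\sigma/\omega_N)^{1/N})|$,
\[
u^*(s)=\frac{1}{N\omega_N^{1/N}}\int_s^{|\Omega|}\sigma^{\frac1N-1}\,v(\sigma)\,\dx\sigma ,\qquad 0<s<|\Omega| .
\]
Comparing distribution functions shows that $v$ is equimeasurable with $|\nabla u^{\#}|$, so $\|v\|_{L^{p,q}(0,|\Omega|)}=\|\nabla u^{\#}\|_{L^{p,q}(\Omega^{\#})}$, while $\|u^*\|_{L^{p^*,q}(0,|\Omega|)}=\|u^{\#}\|_{L^{p^*,q}(\Omega^{\#})}$. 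Thus the assertion becomes equivalent to the one--dimensional weighted inequality
\[
\left\|\int_s^{|\Omega|}\sigma^{\frac1N-1}\,w(\sigma)\,\dx\sigma\right\|_{L^{p^*,q}(0,|\Omega|)}\le \frac{Np}{N-p}\,\|w\|_{L^{p,q}(0,|\Omega|)}\qquad(w\ge0),
\]
because $\frac{1}{N\omega_N^{1/N}}\cdot\frac{Np}{N-p}=\omega_N^{-1/N}\frac{p}{N-p}=S_{N,p}$.

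For this last inequality I would use that the $q$--th power of \eqref{norma Lorentz} equals $\int_0^\infty(s^{1/p}f^*(s))^q\,\dx s/s$, and observe that, after the substitution $\sigma=s\tau$ and since $\frac1N=\frac1p-\frac1{p^*}$,
\[
s^{1/p^*}\int_s^{|\Omega|}\sigma^{\frac1N-1}w(\sigma)\,\dx\sigma=\int_1^\infty \tau^{\frac1N-1}\,s^{1/p}\,w(s\tau)\,\mathbf 1_{\{s\tau<|\Omega|\}}\,\dx\tau .
\]
Minkowski's integral inequality in $L^q(\dx s/s)$ together with the dilation identity $\|s^{1/p}w(s\tau)\|_{L^q(\dx s/s)}=\tau^{-1/p}\|w\|_{L^{p,q}}$ then bounds the left side by $\|w\|_{L^{p,q}}\int_1^\infty\tau^{\frac1N-\frac1p-1}\,\dx\tau=(\tfrac1p-\tfrac1N)^{-1}\|w\|_{L^{p,q}}=p^*\,\|w\|_{L^{p,q}}$, the constant $p^*$ being optimal (approached on $w_\eta(\sigma)=\sigma^{-1/p^*+\eta}$ as $\eta\downarrow0$). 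Chaining with the two reductions gives $\|u\|_{L^{p^*,q}(\Omega)}=\|u^*\|_{L^{p^*,q}}\le\frac{1}{N\omega_N^{1/N}}\,p^*\,\|v\|_{L^{p,q}}=S_{N,p}\,\|\nabla u^{\#}\|_{L^{p,q}(\Omega^{\#})}\le S_{N,p}\,\|\nabla u\|_{L^{p,q}(\Omega)}$, which is \eqref{Sobolevin}.

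The hard parts are keeping every step sharp. The P\'olya--Szeg\H{o} reduction must lose no constant, which forces the use of the sharp isoperimetric inequality (equality precisely on balls) and a careful treatment of the critical--point set of the merely $W_0^{1,1}$ function $u$; and the one--dimensional inequality must be proved with constant exactly $p^*=Np/(N-p)$. An alternative that bypasses P\'olya--Szeg\H{o} as a black box is the direct approach of O'Neil and Alvino: from the coarea identity $-\mu'(t)=\int_{\{u=t\}}|\nabla u|^{-1}\dx\mathcal H^{N-1}$, H\"older's inequality on the level sets, and the isoperimetric inequality one derives the pointwise bound $-(u^*)'(s)\le(N\omega_N^{1/N}s^{1-1/N})^{-1}j'(s)^{1/p}$ with $j(s)=\int_{\{u>u^*(s)\}}|\nabla u|^p\,\dx x$, after which a majorization (Hardy's lemma, the step where $q\le p$ is genuinely needed) lets one replace $j'$ by $(|\nabla u|^*)^p$ before integrating via the same one--dimensional inequality.
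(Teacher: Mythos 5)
The paper does not actually prove Theorem \ref{lorentz}: it is quoted as a known result with references to O'Neil \cite{O} and Alvino \cite{A}, so there is no internal proof to compare against. Your argument reconstructs essentially Alvino's: symmetrize, reduce to a one--dimensional Hardy--type inequality with sharp constant $p^*$, and recover $S_{N,p}=\omega_N^{-1/N}\frac{p}{N-p}$ from the isoperimetric constant $N\omega_N^{1/N}$. The skeleton is sound and the constants are tracked correctly ($\tfrac1N=\tfrac1p-\tfrac1{p^*}$, $\int_1^\infty\tau^{-1/p^*-1}\dx\tau=p^*$, $\tfrac{1}{N\omega_N^{1/N}}\,p^*=S_{N,p}$), and your identification of the paper's definition \eqref{norma Lorentz} with $\int_0^\infty(s^{1/p}f^*(s))^q\,\dx s/s$ is correct.

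Two points need repair. First, the ``dilation identity'' $\|s^{1/p}w(s\tau)\|_{L^q(\dx s/s)}=\tau^{-1/p}\|w\|_{p,q}$ is false for general $w\ge0$: the left--hand side equals $\tau^{-1/p}\bigl(\int_0^\infty \sigma^{q/p}w(\sigma)^q\,\dx\sigma/\sigma\bigr)^{1/q}$, which coincides with $\tau^{-1/p}\|w\|_{p,q}$ only when $w$ is nonincreasing --- and your $w=v$, being merely equimeasurable with $|\nabla u^{\#}|$, need not be. The step is saved by the Hardy--Littlewood rearrangement inequality: since $q\le p$ the weight $\sigma^{q/p-1}$ is nonincreasing, hence $\int_0^\infty w^q\sigma^{q/p-1}\,\dx\sigma\le\int_0^\infty (w^*)^q\sigma^{q/p-1}\,\dx\sigma=\|w\|_{p,q}^q$. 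This is where the hypothesis $q\le p$ is actually consumed in your main argument, not only in the Hardy--lemma step of the alternative route you sketch at the end. Second, the P\'olya--Szeg\H{o} inequality in the $L^{p,q}$ norm, which you invoke as a black box, is the genuinely hard ingredient when $q\ne p$; it is not a formal consequence of the $L^p$ case and requires the pseudo--rearrangement machinery you allude to, or else one should switch to the direct O'Neil--Alvino route via $-(u^*)'(s)\le (N\omega_N^{1/N}s^{1-1/N})^{-1}(j'(s))^{1/p}$ plus Hardy's lemma, which avoids symmetrizing $u$ altogether and is closer to the cited sources. With the first point corrected and the second either proved or replaced by that direct argument, the proof is complete.
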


\medskip

For our purposes, we also need to introduce some spaces involving the time variable. Hereafter, 
for the time derivative
$u_t$ of a function $u$
we adopt 
the alternative notation 
 $\partial_t u$, $\dot{u}$, $u^\prime$ or 
$\dx u/\dx t$.
Let $T>0$. 
If we let $X$ be a separable Banach space endowed with a norm $\|\cdot\|_X$, the space 
$
\pspacegen
$ 
is defined as the class of all measurable
functions $u\colon [0,T] \rightarrow X$
such that  
\[
\|u\|_{\pspacegen}
:=\left(
\int_0^T \|u(t)\|^p_X \dx t
\right)^{1/p}
<\infty
\]
whenever $1\le p <\infty$, and
\[
\|u\|_{\pspacegeninfty}
:=\esssup_{0<t<T }  
 \|u(t)\| _X  
<\infty
\]
for $p=\infty$.
Similarly, the space 
$
\parspgencont
$ represents the class of all 
 continuous 
functions $u\colon [0,T] \rightarrow X$
such that  
\[
\|u\|_{\parspgencont}
:=\max_{0\le t \le T}
 \|u(t)\| _X  
<\infty
\]

\medskip 

We
will mainly deal with the case where $X$ is either a Sobolev space or a Lorentz space. In particular, we recall a
well known result (see e.g. \cite[Proposition  1.2, Chapter III, pag. 106]{Showalter book}) involving  the class of functions $W_p(0,T)$ defined 
as follows
\[
W_p(0,T):=
\left\{
v \in \pspace\colon\, v_t \in {\dual}
\right\}
\]
equipped with the norm
\[
\|
u
\|_{W_p(0,T)}
 := 
\|
u
\|_{\pspace}
+
\|
u_t
\|_{\dual}
\]
%
\begin{lemma}\label{incl}
Let $p > 2N /(N+2)$.  Then 
$W_p(0,T)$ is contained into
$
\parspcont$ and any function $u \in W_p(0,T)$ satisfies
\[
\|
u
\|_{\parspcont}
\le C
\|
u
\|_{W_p(0,T)}
\]
for some constant $C>0$. \par
Furthermore, 
the function $t\in[0,T]\mapsto\|u(\cdot,t)\|^2_{L^2(\Omega)}$ is absolutely continuous and \begin{equation}\label{tid} \frac 1 2 \frac {\rm d} {\dx t} \|u(\cdot,t)\|^2_{L^2(\Omega)} = \left\langle
u_t(\cdot,t),u(\cdot,t) \right\rangle \qquad \text{for a.e. $t\in[0,T]$}
 \end{equation}
\end{lemma}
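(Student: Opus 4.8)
The plan is to view $V:=W^{1,p}_{0}(\Omega)$, $H:=L^{2}(\Omega)$ and $V':=W^{-1,p'}(\Omega)$ as a Gelfand triple and then argue by regularization in the time variable, thereby reducing to the classical abstract statement. The hypothesis $p>2N/(N+2)$ enters precisely here: it forces $p^{*}=Np/(N-p)\ge 2$ when $p<N$ (and the embedding is automatic for $p\ge N$), so that $V\hookrightarrow H$ continuously and densely; identifying $H$ with its dual then gives $V\hookrightarrow H\hookrightarrow V'$, the duality pairing extending the scalar product of $L^{2}(\Omega)$, i.e.\ $\langle v,h\rangle=(v,h)_{L^{2}}$ for $v\in V$, $h\in H$.

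First I would produce approximants $u_{n}\in C^{1}([0,T];V)$ with $u_{n}\to u$ in $\pspace$ and $u_{n}'\to u'$ in $\dual$. Since $u'\in\dual\subset L^{1}(0,T;V')$, the map $u$ admits a representative that is absolutely continuous with values in $V'$; reflecting across $t=0$ and $t=T$ therefore extends $u$ to a larger interval with no jump of the $V'$-valued weak derivative, and mollification in $t$ of this extension yields the $u_{n}$, with $(u_{n})'=u'\ast\rho_{1/n}$ on $[0,T]$. For such smooth maps the product rule in the Hilbert space $H$ gives, for all $s,t\in[0,T]$,
\[
\|u_{n}(t)\|_{L^{2}}^{2}-\|u_{n}(s)\|_{L^{2}}^{2}=2\int_{s}^{t}\langle u_{n}'(\tau),u_{n}(\tau)\rangle\,d\tau ,
\]
since $u_{n}'(\tau)\in V\subset H$ and $\langle u_{n}',u_{n}\rangle=(u_{n}',u_{n})_{L^{2}}$.

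From this identity $\sup_{t}\|u_{n}(t)\|_{L^{2}}^{2}\le\inf_{s}\|u_{n}(s)\|_{L^{2}}^{2}+2\|u_{n}'\|_{\dual}\|u_{n}\|_{\pspace}$; bounding $\inf_{s}$ by Chebyshev's inequality applied to $\|u_{n}(\cdot)\|_{L^{2}}^{2}\in L^{p/2}(0,T)$, together with continuity of $t\mapsto\|u_{n}(t)\|_{L^{2}}$, yields $\inf_{s}\|u_{n}(s)\|_{L^{2}}^{2}\lesssim T^{-2/p}\|u_{n}\|_{\pspace}^{2}$, hence the estimate $\|u_{n}\|_{\parspcont}\le C\|u_{n}\|_{W_{p}(0,T)}$ and, in particular, a uniform bound on $\|u_{n}\|_{\parspcont}$. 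Applying the identity to $u_{n}-u_{m}$ bounds $\|u_{n}-u_{m}\|_{\parspcont}^{2}$ by $\frac1T\int_{0}^{T}\|u_{n}-u_{m}\|_{L^{2}}^{2}\,dt+2\|u_{n}'-u_{m}'\|_{\dual}\|u_{n}-u_{m}\|_{\pspace}$; the second term tends to $0$, and interpolating $L^{2}(0,T;L^{2})$ between $\parspcont$ and $L^{p}(0,T;L^{2})$ (the latter inclusion being trivial when $p\ge 2$), using the uniform $\parspcont$ bound and $u_{n}\to u$ in $\pspace\hookrightarrow L^{p}(0,T;L^{2})$, forces the first term to $0$ as well. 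Thus $(u_{n})$ is Cauchy in $\parspcont$, its limit coincides a.e.\ with $u$, which gives the inclusion and the norm inequality; and letting $n\to\infty$ in the displayed identity — the left side by uniform convergence, the right side since $\langle u_{n}',u_{n}\rangle\to\langle u',u\rangle$ in $L^{1}(0,T)$ by bilinearity and H\"older — yields $\|u(t)\|_{L^{2}}^{2}-\|u(s)\|_{L^{2}}^{2}=2\int_{s}^{t}\langle u'(\tau),u(\tau)\rangle\,d\tau$ with $\langle u',u\rangle\in L^{1}(0,T)$, which is exactly the absolute continuity of $t\mapsto\|u(\cdot,t)\|_{L^{2}(\Omega)}^{2}$ and identity \eqref{tid}.

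The only genuinely delicate point is the construction above: obtaining the $C^{1}$-in-time approximants and, above all, the $L^{\infty}$-in-time bound on $\|u_{n}(t)\|_{L^{2}}$. Since $2N/(N+2)<2$, the exponent $p$ may be $<2$, and then one cannot read this bound off an embedding $\pspace\subset L^{2}(0,T;L^{2})$ — that embedding fails — so the Chebyshev-plus-continuity argument (or an equivalent interpolation) is essential; with that in hand everything else is routine, and the statement as a whole is classical, cf.\ \cite{Showalter book}.
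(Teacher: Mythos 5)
The paper itself offers no proof of Lemma~\ref{incl}; it simply cites \cite[Ch.~III, Prop.~1.2]{Showalter book}. Your argument is the standard proof one finds in that reference (and in Lions): set up the Gelfand triple $W^{1,p}_0(\Omega)\hookrightarrow L^2(\Omega)\hookrightarrow W^{-1,p'}(\Omega)$ (valid precisely because $p>2N/(N+2)$), mollify in time, use the $C^1$ product rule, and pass to the limit. The one place where a careless reader could go wrong is the $L^\infty$-in-time bound when $2N/(N+2)<p<2$, since $\pspace\not\hookrightarrow L^2(0,T;L^2)$ then; you handle this correctly via the Chebyshev/mean-value selection of a good time slice $s$, and the subsequent interpolation of $L^2(0,T;L^2)$ between $\parspcont$ and $L^p(0,T;L^2)$ to get Cauchyness. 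The limit passage in the energy identity (uniform convergence on the left, $L^1(0,T)$ convergence of $\langle u_n',u_n\rangle$ by bilinearity and H\"older on the right) is also correct. In short: your proof is complete, and it is the same approach the paper implicitly relies on through its citation.
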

Finally, we recall the classical compactness result due to Aubin--Lions 
(see e.g. \cite[Proposition  1.3, Chapter III, pag. 106]{Showalter book}).
\begin{lemma}[Aubin--Lions]
Let $X_0, X, X_1$ be Banach spaces with $X_0$ and $X_1$ reflexive. Assume that $X_0$ is compactly embedded into $X$ and $X$ is continuosly embedded into $X_1$. For $1<p,q<\infty$ let $$W:= \{u \in L^p(0,T,X_0)\colon \partial_t u  \in L^q(0,T,X_1)\}$$ Then $W$ is compactly embedded into $L^p(0,T,X)$.
\end{lemma}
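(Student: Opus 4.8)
The plan is to obtain the compactness from three ingredients: reflexivity of $X_0$ and $X_1$, used to extract weakly convergent subsequences from a bounded sequence of $W$; an interpolation inequality of Ehrling--Lions type between $X_0$, $X$ and $X_1$; and a regularization in the time variable that converts the bound on $\partial_t u$ into a quantitative time-continuity estimate in $X_1$. The first step is to record the interpolation inequality: since $X_0\hookrightarrow\hookrightarrow X\hookrightarrow X_1$, for every $\eta>0$ there is $C_\eta>0$ with
\[
\|v\|_X\le\eta\,\|v\|_{X_0}+C_\eta\,\|v\|_{X_1}\qquad\text{for all }v\in X_0 .
\]
This follows by contradiction: a violating sequence can be normalized so that $\|v_n\|_X=1$, $(v_n)$ is bounded in $X_0$ and $\|v_n\|_{X_1}\to 0$; the compact embedding $X_0\hookrightarrow X$ gives $v_n\to v$ in $X$ along a subsequence, while $X\hookrightarrow X_1$ (being a continuous injection) forces the $X$-limit $v$ to vanish, contradicting $\|v\|_X=1$.

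Next, let $(u_n)$ be bounded in $W$. Reflexivity of $X_0$, $X_1$ together with $1<p,q<\infty$ makes $L^p(0,T,X_0)$ and $L^q(0,T,X_1)$ reflexive, so up to a subsequence $u_n\rightharpoonup u$ in $L^p(0,T,X_0)$ and $\partial_t u_n\rightharpoonup w$ in $L^q(0,T,X_1)$; testing against $\varphi'(t)\xi^*$ and $\varphi(t)\xi^*$ with $\varphi\in C_c^\infty(0,T)$ and $\xi^*\in X_1^*$ identifies $w=\partial_t u$, hence $u\in W$. Setting $v_n:=u_n-u$, one has $v_n\rightharpoonup 0$ in $L^p(0,T,X_0)$, $\partial_t v_n\rightharpoonup 0$ in $L^q(0,T,X_1)$, and $(v_n)$ still bounded in $W$; the goal is $v_n\to 0$ strongly in $L^p(0,T,X)$. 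Applying the interpolation inequality pointwise in $t$ and integrating,
\[
\|v_n\|_{L^p(0,T,X)}\le\eta\,\|v_n\|_{L^p(0,T,X_0)}+C_\eta\,\|v_n\|_{L^p(0,T,X_1)} ,
\]
so, since $\|v_n\|_{L^p(0,T,X_0)}$ is bounded and $\eta$ is arbitrary, it remains only to prove $v_n\to 0$ strongly in $L^p(0,T,X_1)$.

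This last reduction is the crux, and I would handle it by time-averaging. For $0<h<T$ set $v_n^h(t):=\frac1h\int_t^{t+h}v_n(s)\dx s$ on $(0,T-h)$. From $v_n(s)-v_n(t)=\int_t^s\partial_\tau v_n\dx\tau$ and H\"older in time one gets $\|v_n(s)-v_n(t)\|_{X_1}\le|s-t|^{1/q'}\|\partial_t v_n\|_{L^q(0,T,X_1)}$, which yields both $\|v_n-v_n^h\|_{L^p(0,T-h,X_1)}\le C\,h^{1/q'}$ and, averaging the inequality over $s$ and using that $\|v_n\|_{L^1(0,T,X_1)}\lesssim\|v_n\|_{L^p(0,T,X_0)}$ is bounded, a uniform estimate $\sup_t\|v_n(t)\|_{X_1}\le C$, whence $\|v_n\|_{L^p(T-h,T,X_1)}\le C\,h^{1/p}$. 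On the other hand, for each fixed $t$ and $\xi^*\in X_0^*$ we have $\langle\xi^*,v_n^h(t)\rangle=\int_0^T\frac1h\mathbf 1_{(t,t+h)}(s)\langle\xi^*,v_n(s)\rangle\dx s\to 0$, so $v_n^h(t)\rightharpoonup 0$ in $X_0$, while $\|v_n^h(t)\|_{X_0}\le h^{-1/p}\|v_n\|_{L^p(0,T,X_0)}$ is bounded uniformly in $n$ and $t$; since $X_0\hookrightarrow\hookrightarrow X$, a bounded sequence converging weakly in $X_0$ converges strongly in $X$, hence $v_n^h(t)\to 0$ in $X$ for every $t$ with a uniform-in-$n$ bound, and dominated convergence gives $\|v_n^h\|_{L^p(0,T-h,X)}\to 0$ as $n\to\infty$, for each fixed $h$. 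Combining, given $\varepsilon>0$ choose $h$ so small that $C\,h^{1/q'}+C\,h^{1/p}<\varepsilon/2$, then $n$ large so that $C\|v_n^h\|_{L^p(0,T-h,X)}<\varepsilon/2$ (using $X\hookrightarrow X_1$); this gives $\|v_n\|_{L^p(0,T,X_1)}\to 0$, hence $v_n\to 0$ in $L^p(0,T,X)$, which is the assertion.

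I expect the main obstacle to be precisely this time-averaging step. One cannot invoke Arzel\`{a}--Ascoli directly in $C([0,T],X)$, since the bound of $u_n$ in $L^p(0,T,X_0)$ does not control $u_n(t)$ in $X_0$ at a fixed time; the averages $v_n^h$ are introduced exactly to trade a small amount of time-regularity --- controlled uniformly in $n$ through $\|\partial_t v_n\|_{L^q(0,T,X_1)}$ --- for quantities $v_n^h(t)$ that do lie, for each $t$, in a bounded subset of $X_0$, where the compact embedding $X_0\hookrightarrow\hookrightarrow X$ can finally be used.
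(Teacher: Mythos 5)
The paper does not prove this lemma at all: it is quoted as a classical result with a reference to Showalter's book (Proposition 1.3, Chapter III), so there is no in-paper argument to compare against. Your proof is the standard Lions-type argument (Ehrling interpolation to reduce matters to strong convergence in $L^p(0,T,X_1)$, then Steklov averages to exploit the $L^q(0,T,X_1)$ bound on $\partial_t u_n$), and all the steps check out, including the uniform $C([0,T],X_1)$ bound and the pointwise weak-to-strong upgrade for $v_n^h(t)$; it is a correct and complete proof of the cited result.
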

A prototypical example of application of this lemma corresponds to the 
choices $q=p$, $X_0=W^{1,p}_0(\Om)$, $X_1=W^{-1,p^\prime} (\Om)$ and $X=L^p(\Om)$ if $p\ge 2$ or $X=L^2(\Om)$ for $\frac {2N}{N+2}<p<2$. Obviously 
  $L^2(\Om)\subset L^p(\Om)$ as long as $p<2$, and therefore we deduce the following.
\begin{lemma} 
If $p>\frac {2N}{N+2}$ then $W_p(0,T)$  is  compactly embedded into  $L^p(\Om_T)$. 
\end{lemma}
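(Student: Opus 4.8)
The plan is to read the statement off from the Aubin--Lions lemma applied to the prototypical triple discussed just above, completed by an elementary H\"older embedding in the subquadratic range.

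Concretely, I would take $X_0=W^{1,p}_0(\Omega)$, $X_1=W^{-1,p^\prime}(\Omega)$, $q=p^\prime$, and
\[
X=\begin{cases} L^p(\Omega) & \text{if } p\ge 2,\\[4pt] L^2(\Omega) & \text{if } \tfrac{2N}{N+2}<p<2,\end{cases}
\]
so that the class $W$ appearing in the Aubin--Lions lemma coincides exactly with $W_p(0,T)$. It then remains to check the three hypotheses of that lemma. Reflexivity of $X_0$ and $X_1$ is immediate, since $1<p<\infty$ forces $1<p^\prime<\infty$. The compact embedding of $X_0$ into $X$ is the Rellich--Kondrachov theorem: $W^{1,p}_0(\Omega)$ embeds compactly into $L^r(\Omega)$ for every $r<p^*=Np/(N-p)$, and the standing assumption $p>2N/(N+2)$ is precisely equivalent to $p^*>2$, which makes both the choice $r=p$ (for $p\ge 2$, where $p<p^*$ anyway) and the choice $r=2$ (for $p<2$) admissible. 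For the continuous embedding of $X$ into $X_1$ one writes, when $p\ge 2$, $L^p(\Omega)\hookrightarrow L^{p^\prime}(\Omega)\hookrightarrow W^{-1,p^\prime}(\Omega)$ (using $p^\prime\le 2\le p$ and $|\Omega|<\infty$); and when $p<2$, combining $W^{1,p}_0(\Omega)\hookrightarrow L^{p^*}(\Omega)$ with $(p^*)^\prime\le 2$ (again equivalent to $p^*\ge 2$) yields $L^2(\Omega)\hookrightarrow L^{(p^*)^\prime}(\Omega)\hookrightarrow W^{-1,p^\prime}(\Omega)$.

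With the hypotheses verified, the Aubin--Lions lemma gives that $W_p(0,T)$ is compactly embedded into $\pspacegen$. If $p\ge 2$ this reads $L^p(0,T,L^p(\Omega))=L^p(\Om_T)$ by Fubini, and we are done. If $\tfrac{2N}{N+2}<p<2$, then $|\Omega|<\infty$ and H\"older's inequality give a continuous embedding $L^2(\Omega)\hookrightarrow L^p(\Omega)$, hence $L^p(0,T,L^2(\Omega))\hookrightarrow L^p(0,T,L^p(\Omega))=L^p(\Om_T)$ continuously; composing the compact embedding with this continuous one produces the desired compact embedding of $W_p(0,T)$ into $L^p(\Om_T)$. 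The only point that genuinely requires attention is the verification of the Aubin--Lions hypotheses in the range $\tfrac{2N}{N+2}<p<2$ --- which is exactly where the assumption $p>2N/(N+2)$ enters --- everything else being routine.
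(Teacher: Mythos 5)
Your proposal is correct and follows exactly the route the paper takes: apply Aubin--Lions with $X_0=W^{1,p}_0(\Omega)$, $X_1=W^{-1,p'}(\Omega)$, $X=L^p(\Omega)$ for $p\ge 2$ and $X=L^2(\Omega)$ for $\frac{2N}{N+2}<p<2$, then use $L^2(\Omega)\hookrightarrow L^p(\Omega)$ on a bounded domain to land in $L^p(\Omega_T)$ in the subquadratic range. You have simply spelled out the Rellich--Kondrachov and duality embeddings that the paper leaves implicit (and you correctly take $q=p'$ where the paper's text has a slip writing $q=p$).
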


\subsection{Gronwall type results}
We recall a couple of lemmas  (see  \cite{Moscariello-Frankowska})
whose application  will 
be essential in the study of the time 
behavior of the solutions to \eqref{mp}.
\begin{lemma}\label{lCau1}
Consider  a Carath\'eodory function  $\psi : [t_0,T ] \times \mathbb R   \to \mathbb R_+$  such that  for every $r >0$ there exists $k_r \in L^1(t_0,T ; \mathbb R_+)$ satisfying for a.e. $t \in [t_0,T ]$
$$ \sup_{|x| \leq r } \psi(t,x) \leq k_r(t).$$

 Let  $g \in L^1(t_0,T ; \mathbb R)$ and $\gamma: [t_0,T ] \to \mathbb R_+  $ be measurable, bounded and  satisfy
\begin{equation}\label{GM2} \gamma(t_2) - \gamma(t_1) + \int_{t_1}^{t_2}\psi (t,\gamma(t))dt \leq  \int_{t_1}^{t_2} g(t) dt \qquad t_0 \leq t_1 \leq t_2 \leq T  .\end{equation}

Then there exists a  solution $x(\cdot) \in W^{1,1}([t_0,T])$ of
the Cauchy problem
\begin{equation}\label{GM1} \left\{\begin{array}{lll}
x'(t) = - \psi(t,x(t)) +g(t), \quad {\rm a.e. } \\
x(t_0)= \gamma(t_0)
\end{array}
\right.
\end{equation}
 such that $\gamma(t) \leq x(t)$ for all $t \in [t_0,T  ].$

Furthermore, if  $g \in L^1(t_0,\infty; \mathbb R)$,   $\psi$ is defined on $[t_0,+\infty) \times \mathbb R  $, $\gamma : [t_0,+\infty) \to \mathbb R_+ $
 is measurable and locally bounded and  for every $T>t_0$ the above assumptions hold true,   then there exists a solution $x $ to (\ref{GM1}) defined
  on $[t_0,\infty)$ such that $\gamma \leq x$. In particular,
 $$\limsup_{t \to \infty} \gamma(t) \leq \limsup_{t \to \infty} x(t). $$ 
\end{lemma}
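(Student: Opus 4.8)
The plan is to build the desired function $x$ as a solution of a suitably modified Cauchy problem, and then to check by a comparison argument that this solution stays above $\gamma$. I would start from two preliminary remarks. Setting $h(t):=\psi(t,\gamma(t))$, the boundedness of $\gamma$ and the growth condition on $\psi$ give $0\le h(t)\le k_r(t)$ for a.e.\ $t$, with $r:=\sup_{[t_0,T]}\gamma$; hence $h\in L^1(t_0,T;\mathbb R_+)$ and the left-hand side of \eqref{GM2} is well defined. Moreover \eqref{GM2} says precisely that $t\mapsto\gamma(t)-\int_{t_0}^{t}\bigl(g(s)-h(s)\bigr)\,ds$ is non-increasing, and since the integral term is continuous it follows that $\gamma$ has one-sided limits at every point, with $\gamma(t^-)\ge\gamma(t)\ge\gamma(t^+)$. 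This mild regularity of $\gamma$ is all that will be needed.

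To construct the solution, the obstruction is that $\psi$ is not assumed monotone in its second variable, so one cannot simply compare $\gamma$ with a solution of \eqref{GM1}. I would bypass this by replacing $\psi$ with $\Psi(t,x):=\psi\bigl(t,\max\{x,\gamma(t)\}\bigr)$, which is still a Carath\'eodory function (for fixed $x$, $t\mapsto\max\{x,\gamma(t)\}$ is measurable; for fixed $t$, $x\mapsto\max\{x,\gamma(t)\}$ is continuous), and then truncating from above, $\Psi^*(t,x):=\Psi\bigl(t,\min\{x,R_1\}\bigr)$ with $R_1:=\gamma(t_0)+\|g\|_{L^1(t_0,T)}+\sup_{[t_0,T]}\gamma+1$. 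Then $0\le\Psi^*(t,x)\le k_{R_1}(t)$ for every $x$, with $k_{R_1}\in L^1(t_0,T)$, so Carath\'eodory's existence theorem produces a solution $x\in W^{1,1}([t_0,T])$ of $x'=-\Psi^*(t,x)+g$, $x(t_0)=\gamma(t_0)$. Since $\Psi^*\ge0$ we get $x(t)\le\gamma(t_0)+\int_{t_0}^{t}g<R_1$, so the upper truncation is never active and in fact $x'=-\Psi(t,x)+g$ a.e.\ on $[t_0,T]$.

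The core step is to prove $x(t)\ge\gamma(t)$ for every $t\in[t_0,T]$. I would argue by contradiction: suppose $x(\tau)<\gamma(\tau)$ and put $a:=\sup\{t\in[t_0,\tau)\colon x(t)\ge\gamma(t)\}$ (a nonempty set, as it contains $t_0$); one may assume $a<\tau$, for otherwise a left-approximating sequence $t_n\uparrow\tau$ with $x(t_n)\ge\gamma(t_n)$ would already force $x(\tau)\ge\gamma(\tau^-)\ge\gamma(\tau)$. On $(a,\tau]$ we have $x<\gamma$, hence $\Psi(s,x(s))=\psi(s,\gamma(s))=h(s)$ there, so on any $[t_1,t_2]\subseteq(a,\tau]$ the equation integrates to $x(t_2)-x(t_1)=\int_{t_1}^{t_2}\bigl(g(s)-h(s)\bigr)\,ds$; comparing with \eqref{GM2} gives $(x-\gamma)(t_2)\ge(x-\gamma)(t_1)$, i.e.\ $x-\gamma$ is non-decreasing on $(a,\tau]$. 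Letting $t_1\downarrow a$ and using the continuity of $x$ and the existence of $\gamma(a^+)$, we obtain $(x-\gamma)(\tau)\ge x(a)-\gamma(a^+)$; and the definition of $a$, together with $\gamma(a^-)\ge\gamma(a)\ge\gamma(a^+)$, yields $x(a)\ge\gamma(a^+)$, so $(x-\gamma)(\tau)\ge0$ --- contradicting $x(\tau)<\gamma(\tau)$. Therefore $x\ge\gamma$ on $[t_0,T]$; then $\max\{x,\gamma\}=x$ pointwise, so $\Psi(t,x(t))=\psi(t,x(t))$ and $x$ solves \eqref{GM1} with $\gamma\le x$.

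For the infinite--time horizon I would repeat this construction on each interval $[t_0,n]$ --- the local boundedness of $\gamma$ makes the hypotheses available there --- obtaining solutions $x_n$ with $0\le\gamma\le x_n\le\gamma(t_0)+\|g\|_{L^1(t_0,\infty)}=:R_\infty$ on $[t_0,n]$ and $|x_n'|\le k_{R_\infty}+|g|$, a single majorant lying in $L^1_{\mathrm{loc}}([t_0,\infty))$. These uniform bounds and the uniform integrability of the derivatives let me apply the Arzel\`a--Ascoli theorem on each $[t_0,m]$ and extract, diagonally, a subsequence converging locally uniformly to some $x$ on $[t_0,\infty)$; passing to the limit in the integral form of the equation (dominated convergence, using the continuity of $\psi(s,\cdot)$ and the majorant $k_{R_\infty}$) shows that $x$ solves \eqref{GM1} on $[t_0,\infty)$ with $\gamma\le x$, hence $\limsup_{t\to\infty}\gamma(t)\le\limsup_{t\to\infty}x(t)$. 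The main difficulty in the whole scheme is the lack of monotonicity of $\psi$ in the state variable, which is exactly what defeats the naive comparison and motivates the modification $\Psi(t,x)=\psi(t,\max\{x,\gamma(t)\})$; the merely measurable character of $\gamma$ is then a secondary technical point, absorbed by the one-sided limits obtained at the start.
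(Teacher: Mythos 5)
The paper does not actually prove this lemma: it is recalled from Frankowska--Moscariello \cite{Moscariello-Frankowska}, so there is no in-text proof to compare yours against. Your blind argument, however, is complete and correct, and I could not find a gap in it. The two genuine difficulties are both handled cleanly. First, existence: since $\psi$ is merely Carath\'eodory, you build the candidate solution from the modified field $\Psi^*(t,x)=\psi\bigl(t,\max\{\min\{x,R_1\},\gamma(t)\}\bigr)$, which is Carath\'eodory (Nemytskii measurability of $t\mapsto\psi(t,\max\{x,\gamma(t)\})$, continuity in $x$) and uniformly dominated by $k_{R_1}\in L^1$; Carath\'eodory's theorem then gives a global $W^{1,1}$ solution, and the one-sided bound $x'\le g$ shows the upper truncation is inactive. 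Second, comparison without monotonicity of $\psi(t,\cdot)$: the decisive observation is that $t\mapsto\gamma(t)-\int_{t_0}^t(g-h)$ with $h=\psi(\cdot,\gamma(\cdot))$ is non-increasing by \eqref{GM2}, so $\gamma$ has one-sided limits with $\gamma(t^-)\ge\gamma(t)\ge\gamma(t^+)$; combined with the fact that on any interval where $x<\gamma$ the modified equation reduces to $x'=g-h$, this makes $x-\gamma$ non-decreasing there, and the contradiction at the last crossing point $a$ goes through using $x(a)\ge\gamma(a^+)$. The passage to $[t_0,\infty)$ via uniform $L^1_{\mathrm{loc}}$ derivative bounds, Arzel\`a--Ascoli, and a diagonal subsequence is also sound. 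In short, this is a correct self-contained proof; the replacement $\psi(t,x)\rightsquigarrow\psi(t,\max\{x,\gamma(t)\})$ is exactly the right device to defeat the lack of one-sided Lipschitz or monotonicity structure.
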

%
%
%

\begin{lemma}\label{lCau2}
Under all the assumptions of  Lemma \ref{lCau1} suppose that $\psi (t,a)=0$ for all $a \leq 0$, $ g (\cdot) \geq 0$,  $\psi (t,\cdot)$ is increasing for a.e. $t \in [t_0,T  ] $ and
that for any $R>r >0$ there exists $\bar k_{R,r} \in L^1(t_0,T ; \mathbb R_+)$ satisfying for a.e. $t \in [t_0,T ]$
$$  |\psi(t,x) - \psi(t,y)| \leq k_{R,r}(t)|x - y| \quad \forall\; x,y \in [r,R]  .
 $$
Then the solution $z(\cdot)$ of
\begin{equation}\label{GM5}  \left\{\begin{array}{lll}
z'(t) = - \psi(t,z(t)), \quad {\rm a.e. } \\
z(t_0)= \gamma(t_0)
\end{array}
\right.
\end{equation}  is unique and  well defined on $ [t_0,T  ]$,   $z(\cdot) \geq 0$ and  $\gamma(t) \leq x(t) \leq z(t) + \int_{t_0}^{t}g(s)ds$  for all $t \in [t_0,T  ]$, where $x(\cdot)$ is as in the claim of Lemma \ref{lCau1}.
%
%
%
\end{lemma}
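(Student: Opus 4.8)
The plan is to verify, in this order, the solvability of \eqref{GM5}, the positivity and uniqueness of its solution $z$, and finally the two–sided bound for the function $x$ produced by Lemma~\ref{lCau1}. The guiding observation is that the monotonicity of $\psi(t,\cdot)$ endows \eqref{GM5} with a dissipative (one–sided Lipschitz) structure: this is what will drive both the uniqueness of $z$ and the comparison with $x$, with no quantitative modulus needed. The only step requiring quantitative input is the existence of $z$, where the $L^1$ bounds inherited from the hypotheses of Lemma~\ref{lCau1} feed a compactness argument; the sign conditions $g\ge 0$ and $\psi(t,a)=0$ for $a\le0$ will be used, respectively, for the upper bound on $x$ and for the positivity of $z$.

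\emph{Solvability and bounds for $z$.} Put $z_0:=\gamma(t_0)\ge0$. First I would truncate, replacing $\psi$ by the Carath\'eodory function $\widetilde\psi(t,x):=\psi\bigl(t,\min\{\max\{x,0\},z_0\}\bigr)$, so that $0\le\widetilde\psi(t,x)\le k_{z_0}(t)$ with $k_{z_0}\in L^1(t_0,T)$. The affine map $w\mapsto z_0-\int_{t_0}^{\cdot}\widetilde\psi(s,w(s))\dx s$ then sends $C([t_0,T])$ into a bounded, equicontinuous subset of itself, so Schauder's fixed point theorem yields an absolutely continuous $z$ with $z'=-\widetilde\psi(t,z)$ a.e.\ and $z(t_0)=z_0$. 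Since $\widetilde\psi\ge0$, $z$ is non-increasing, hence $z\le z_0$; and on any subinterval where $z<0$ one has $\widetilde\psi(t,z)=0$, so $z$ is constant there, which contradicts continuity at its left endpoint, where $z\ge0$. Thus $0\le z\le z_0$ on $[t_0,T]$, the truncation is inactive along the trajectory, and $z$ solves \eqref{GM5} on all of $[t_0,T]$ with $z\ge0$.

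\emph{Uniqueness and the two–sided estimate.} If $z_1,z_2$ both solve \eqref{GM5} then $d:=z_1-z_2$ is absolutely continuous, $d(t_0)=0$, and for a.e.\ $t$
\[
\tfrac12\,\frac{\dx}{\dx t}\,d(t)^2=-\,d(t)\bigl(\psi(t,z_1(t))-\psi(t,z_2(t))\bigr)\le0,
\]
because the two factors share their sign by monotonicity of $\psi(t,\cdot)$; hence $d\equiv0$. The bound $\gamma\le x$ is exactly the conclusion of Lemma~\ref{lCau1}. For the upper bound set $G(t):=\int_{t_0}^{t}g(s)\dx s\ge0$ and $y:=x-G$; then $y(t_0)=z_0$ and, a.e.,
\[
y'=x'-g=-\psi(t,x)=-\psi\bigl(t,y+G\bigr)\le-\psi(t,y)
\]
by monotonicity of $\psi(t,\cdot)$ and $G\ge0$, so $y$ is a subsolution of \eqref{GM5} with the same initial datum as $z$. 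Writing $v:=y-z$ one has $v(t_0)=0$ and, a.e.,
\[
\tfrac12\,\frac{\dx}{\dx t}\,\bigl(v^+(t)\bigr)^2=v^+(t)\,v'(t)\le v^+(t)\bigl(\psi(t,z(t))-\psi(t,y(t))\bigr)\le0,
\]
since on $\{v>0\}$ we have $y>z$, hence $\psi(t,y)\ge\psi(t,z)$, while on $\{v\le0\}$ the left-hand side vanishes. Therefore $v^+\equiv0$, i.e.\ $y\le z$, which is precisely $x(t)\le z(t)+\int_{t_0}^{t}g(s)\dx s$ for all $t\in[t_0,T]$.

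\emph{Expected main obstacle.} The genuinely delicate point is that $\psi$ is not assumed Lipschitz down to $x=0$, so the Picard--Lindel\"of theorem does not apply to \eqref{GM5} near the equilibrium: existence must come from the compactness (Schauder) argument above, using only the $L^1$ control of $\widetilde\psi$, and uniqueness must be extracted from the dissipative inequality provided by the monotonicity of $\psi(t,\cdot)$ rather than from a contraction estimate. That same monotonicity, together with $g\ge0$, is exactly what turns $y=x-G$ into a subsolution of \eqref{GM5} comparable to $z$. The rest is routine bookkeeping with absolutely continuous functions, in particular the a.e.\ chain rule $\frac{\dx}{\dx t}\Phi(v(t))=\Phi'(v(t))\,v'(t)$ applied with $\Phi(s)=\tfrac12 s^2$ and $\Phi(s)=\tfrac12(s^+)^2$.
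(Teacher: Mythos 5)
The paper does not actually supply a proof of Lemma~\ref{lCau2}: together with Lemma~\ref{lCau1} it is recalled verbatim from \cite{Moscariello-Frankowska}, so there is no in-paper argument to compare against. Judging your proposal on its own merits: it is correct. The truncation $\widetilde\psi(t,x)=\psi(t,\min\{\max\{x,0\},z_0\})$ combined with the Carath\'eodory and $L^1$-domination hypotheses of Lemma~\ref{lCau1} makes the Schauder/Arzel\`a--Ascoli existence argument work; the sign conditions $\psi\ge0$ and $\psi(t,a)=0$ for $a\le0$ correctly give $0\le z\le z_0$, after which the truncation is transparent and $z$ solves \eqref{GM5}; uniqueness via $\tfrac12\frac{\dx}{\dx t}d^2=-d\bigl(\psi(t,z_1)-\psi(t,z_2)\bigr)\le0$ is exactly the dissipative (one-sided Lipschitz) argument that the monotonicity of $\psi(t,\cdot)$ supports; and the comparison of $y=x-G$ with $z$ through $\tfrac12\frac{\dx}{\dx t}(v^+)^2\le0$ is sound, since $G\ge0$ and the monotonicity give $\psi(t,y+G)\ge\psi(t,z)$ on $\{y>z\}$. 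One observation worth making: your argument never uses the local Lipschitz hypothesis $|\psi(t,x)-\psi(t,y)|\le \bar k_{R,r}(t)|x-y|$ on $[r,R]$ at all --- monotonicity alone carries both uniqueness and the comparison --- so either that hypothesis is redundant here or the source paper's proof of the lemma exploits it in a different way (e.g.\ Picard--Lindel\"of away from the origin). Either way, your route is self-contained and, if anything, slightly more economical in its use of the hypotheses.
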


\section{Weak type and a priori estimates for an auxiliary problem}  
An a priori estimate on the distribution
function of a solution to problem \eqref{mp}
will be fundamental in order to prove Theorem  \ref{ttt}.  
%
%
%
%
To this aim, we let $\phi \colon  \mathbb R \rightarrow \mathbb R  $ be the function defined as
\[
\phi(w):= \frac 1 {p-1} \left[
1-\frac 1 {(1+|w|)^{p-1}}
\right]
{\rm sign}(w)
\,\, \text{for $w \in \mathbb R \setminus\{0\}$ and $\phi(0):=0$}
\]
We   set 
$\Phi(w):=\int_0^{|w|} \phi(\rho)\,d\rho$ and   $\Psi \colon (0,\infty) \rightarrow (0,\infty)$  be the reciprocal of the restriction of $\Phi$ to $(0,\infty)$, so that 
it is a continuous and decreasing function
 such that
$\Psi(k) \rightarrow 0$ as $k \rightarrow \infty$. 
With this notation at hand, 
we have the following result.
\begin{lemma}\label{ldlam}
Let assumptions 
\eqref{1.3} 
and
\eqref{1.5}
be in charge 
and $b \in L^\infty\left(0,T,L^{N,\infty}(\Omega)\right)$. 
For  a fixed 
$\lambda \in (0,1]$, assume that the problem
\begin{equation}\label{mp3}
\left\{
\begin{array}{rl}
&  \displaystyle{
\frac {u_t} \lambda   }  - 
\divergenza  A\left(x,t,u, \frac {\nabla u} \lambda\right)
 = 
f
\qquad\text{in $  \Omega_{T }$},
  \\
\, \\
&   u  = 0    
\qquad\text{on $\partial \Omega    \times (0,T)  $}
, \\
\, \\
& u (\cdot,0) = \lambda     u_0  
\qquad\text{in $  \Omega    $}
, \\
\end{array}
\right.
\end{equation}
admits a solution $u \in \parspcont \cap \pspace $. Then,
for every $k>0$ we have    
\begin{equation}\label{decay}
\sup_{0<t<T}
\left|
\left\{
x\in \Omega\colon\, |u(x,t)| >k
\right\}
\right|\le 
\Psi( k)M_0 
\end{equation}
where
\begin{equation}\label{K0}
\begin{split}
M_0 =
\frac 1 2 \|
u_0
\|^2_{L^2(\Omega)}
+
\|
H
\| _{L^1(\Omega_T)}
 +
 \|b\|^p_{L^p(\Omega_T)}+ \alpha^{-1/(p-1)}
\|
f
\|^{p^\prime}_{\dual}
%
%
%
\end{split}
\end{equation}
\end{lemma}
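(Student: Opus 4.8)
The plan is to use $\phi(u)$ as a test function in the weak formulation of \eqref{mp3}, over $\Omega\times(0,t)$, and to read the resulting bound at the level of the primitive $\Phi$; the crucial feature is that the particular profile of $\phi$ makes the lower order term and the forcing contribution independent of the size of $u$.

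\emph{Admissibility of the test function.} First I would check that $u\in W_p(0,T)$. By the growth bound \eqref{1.5}, $|A(x,t,u,\nabla u/\lambda)|\le\beta\lambda^{1-p}|\nabla u|^{p-1}+(b|u|)^{p-1}+K$, and here $b|u|\in L^p(\Omega_T)$: indeed, by the Lorentz--Sobolev embedding (Theorem \ref{lorentz} with $q=p$) one has $u\in L^p(0,T,L^{p^*,p}(\Omega))$, while $b\in L^\infty(0,T,L^{N,\infty}(\Omega))$, so that $bu\in L^p(\Omega_T)$ by Hölder's inequality for Lorentz spaces (note $\tfrac1N+\tfrac1{p^*}=\tfrac1p$). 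Hence $A(x,t,u,\nabla u/\lambda)\in L^{p'}(\Omega_T;\rN)$, so from the equation $u_t=\lambda\,\divergenza A(x,t,u,\nabla u/\lambda)+\lambda f\in\dual$, whence $u\in W_p(0,T)$. Since $\phi\in C^1(\mathbb R)$, $\phi(0)=0$ and $0<\phi'(w)=(1+|w|)^{-p}\le1$, the chain rule for $W_p(0,T)$-functions (cf. Lemma \ref{incl}, now with the convex primitive $\Phi$ in place of $\tfrac12|\cdot|^2$) gives that $t\mapsto\int_\Omega\Phi(u(\cdot,t))\dx x$ is absolutely continuous, with $\tfrac{\mathrm{d}}{\mathrm{d}t}\int_\Omega\Phi(u(\cdot,t))\dx x=\langle u_t(\cdot,t),\phi(u(\cdot,t))\rangle$ for a.e.\ $t$; as $\phi(u)\in\pspace$, a standard density argument makes $\varphi=\phi(u)$ admissible in \eqref{mp3}.

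\emph{The identity and the estimates.} Using $\nabla\phi(u)=(1+|u|)^{-p}\nabla u$ and dividing by $\lambda$ one obtains, for every $t\in[0,T]$,
\[
\frac1\lambda\int_\Omega\Phi(u(\cdot,t))\dx x
=\frac1\lambda\int_\Omega\Phi(\lambda u_0)\dx x
-\int_0^t\!\!\int_\Omega\frac{A(x,s,u,\nabla u/\lambda)\cdot\nabla u}{(1+|u|)^p}\dx x\dx s
+\int_0^t\langle f,\phi(u)\rangle\dx s .
\]
Since $0<\phi'\le1$ yields $\phi(w)\le w$ for $w\ge0$, and hence $\Phi(w)\le w^2/2$, we get $\int_\Omega\Phi(\lambda u_0)\dx x\le\tfrac{\lambda^2}2\|u_0\|^2_{L^2(\Omega)}$. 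For the second term, \eqref{1.3} applied with $\xi=\nabla u/\lambda$, together with $\tfrac{|u|^p}{(1+|u|)^p}\le1$ and $\tfrac1{(1+|u|)^p}\le1$, gives
\[
\frac{A(x,s,u,\nabla u/\lambda)\cdot\nabla u}{(1+|u|)^p}\ge\frac{\alpha}{\lambda^{p-1}}\,\frac{|\nabla u|^p}{(1+|u|)^p}-\lambda\,b^p-\lambda\,H ,
\]
so that the second term on the right-hand side of the identity is at most $-\tfrac{\alpha}{\lambda^{p-1}}J+\lambda\big(\|b\|^p_{L^p(\Omega_T)}+\|H\|_{L^1(\Omega_T)}\big)$, where $J:=\int_0^t\!\int_\Omega|\nabla u|^p(1+|u|)^{-p}\dx x\dx s$. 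For the forcing term, since $|\nabla\phi(u)|^p=(1+|u|)^{-p^2}|\nabla u|^p\le(1+|u|)^{-p}|\nabla u|^p$, Hölder's inequality in time followed by Young's inequality give $\int_0^t\langle f,\phi(u)\rangle\dx s\le\|f\|_{\dual}J^{1/p}\le\tfrac{\alpha}{\lambda^{p-1}}J+\lambda\,\alpha^{-1/(p-1)}\|f\|^{p'}_{\dual}$. Plugging these into the identity, the two copies of $\tfrac{\alpha}{\lambda^{p-1}}J$ cancel, and after multiplying by $\lambda\le1$,
\[
\int_\Omega\Phi(u(\cdot,t))\dx x\le\lambda^2 M_0\le M_0\qquad\text{for every }t\in[0,T],
\]
with $M_0$ the quantity in \eqref{K0}.

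\emph{Conclusion, and the main obstacle.} Since $\Phi$ is even and strictly increasing on $[0,\infty)$, on $\{x\in\Omega:|u(x,t)|>k\}$ one has $\Phi(u(x,t))>\Phi(k)$, whence $\Phi(k)\,\big|\{x\in\Omega:|u(x,t)|>k\}\big|\le\int_\Omega\Phi(u(\cdot,t))\dx x\le M_0$; recalling $\Psi=1/\Phi$ and taking the supremum over $t\in(0,T)$ yields \eqref{decay}. The only genuinely delicate step is the admissibility of $\phi(u)$ --- that is, establishing $u\in W_p(0,T)$ (which rests on the Lorentz--Sobolev control of $b|u|$) and the chain rule for $t\mapsto\int_\Omega\Phi(u(\cdot,t))$; everything else is routine truncation bookkeeping, the only novelty being that $\phi$ is tailored so that $(b|u|)^p(1+|u|)^{-p}\le b^p$ and $H(1+|u|)^{-p}\le H$, which is exactly what makes the a priori bound independent of $u$ and uniform in $\lambda\in(0,1]$.
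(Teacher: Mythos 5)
Your proposal is correct and follows essentially the same approach as the paper: test the equation with the bounded Lipschitz nonlinearity $\phi(\cdot)\chi_{(0,t)}$, use the coercivity hypothesis \eqref{1.3} together with the key pointwise bounds $(b|u|)^p(1+|u|)^{-p}\le b^p$ and $H(1+|u|)^{-p}\le H$, absorb the forcing term by Young's inequality, and read off the measure bound from Chebyshev's inequality applied to $\Phi$. The only cosmetic differences are that the paper first substitutes $w=u/\lambda$ and tests \eqref{mp4} with $\phi(w)$ (arriving at $\int_\Omega\Phi(w)\le M_0$ and then using $\{|u|>k\}\subset\{|w|>k\}$), whereas you test \eqref{mp3} directly with $\phi(u)$ (arriving at $\int_\Omega\Phi(u)\le\lambda^2M_0$); and you spell out the admissibility of the test function via the Lorentz–Sobolev/Hölder argument showing $u\in W_p(0,T)$, a step the paper dispatches in a one-line parenthetical remark about $\phi$ being Lipschitz.
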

\begin{proof}
First of all, we set $w:= u/\lambda$, in such a way that $w$ solves the problem
\begin{equation}\label{mp4}
\left\{
\begin{array}{rl}
&  
w_t       - 
\divergenza  A\left(x,t,\lambda w ,  \nabla w  \right)
 = 
f
\qquad\text{in $  \Omega_{T }$},
  \\
\, \\
&   w  = 0    
\qquad\text{on $\partial \Omega    \times (0,T)  $}
, \\
\, \\
& w (\cdot,0) =    u_0  
\qquad\text{in $  \Omega    $}
, \\
\end{array}
\right.
\end{equation}
We fix $t \in (0,T)$  and we  choose   $ \varphi:=\phi(w)\chi_{(0,t)} $
as a test   function for 
\eqref{mp4}.
(This can be done since $\phi(\cdot)$ is a Lipschitz function in the whole of $\mathbb R$.)
In this way,  we have
\begin{equation}\label{test1}
\begin{split}
&
\int_\Omega
\Phi(w)\,\dx x
+
\int_{\Omega_{t}}
 A(x,s, \lambda  w,\nabla w) \cdot \nabla \phi(w)
\,
\dx x \dx s
=
\int_\Omega
\Phi(u_0)\,\dx x
+
\int_0^{t} 
\left\langle
f
,
  \phi(w)
\right\rangle
\dx s 
\end{split}
\end{equation}
Observe explicitly that
\[
 \nabla \phi(w) = \frac { \nabla w }{(1+|w|)^p}  \chi_{(0,t)}  
\]
Therefore, by 
\eqref{1.3}
we have
\begin{equation}\label{test2}
\begin{split}
& 
\int_\Omega \Phi(w)       \,\dx x
+\alpha
\int_{\Omega_t}
\frac
{
|\nabla w|^p 
}
{(1+|w|)^p}\,\dx x \dx s
\\
&
\le
\int_\Omega \Phi(u_0)       \,\dx x
+ \int_0^{t} 
\left\langle
f,
  \phi(w) 
\right\rangle
\dx s 
+
\int_{\Omega_t}
\left(
\frac{   b  |w| }{1+|w|}
\right)^p
\,\dx x \dx s
+
\int_{\Omega_t}
 {H(x,s)}
 \dx x \dx s
\end{split}
\end{equation}
By means of Young inequality 
we have 
\begin{equation}
\begin{split}
 \int_0^{t} 
\left\langle
f
,
  \phi(w) 
\right\rangle
\,
\dx s 
&\le 
 \alpha
\int_{\Omega_t}
\frac{|\nabla w|^p}
{
(1+|w|)^p
}
\,\dx x \dx s
+
  \alpha ^{-1/(p-1)}  \|f\|^{p^\prime}_{\dual} 
\end{split}
\end{equation}
In view of the latter estimate and of the fact that $0\le \Phi(k) \le \frac 1 2  k ^2$ for any $k\ge 0$,  from \eqref{test2}
we infer
\begin{equation}\label{test22}
\begin{split}
& 
\int_\Omega \Phi(w)       \,\dx x
\le M_0 
\end{split}
\end{equation}
For $t \in (0,T)$ fixed,  we set $E_k(t):=\{x\in \Omega\colon\, |u(x,t)|>k\}$ for $k>0$. Since $\lambda \in [0,1]$, clearly $E_k(t) \subset \{x\in \Omega\colon\, |w(x,t)|>k\}$. 
%
By  the monotonicity of $\Phi$
and by \eqref{test22}, we have
 $ |E_k(t)|\Phi(k) \le M_0   $, which implies the claimed estimate.
\end{proof}

Now, we are in position to prove some a priori estimate for a solution to
 problem 
\eqref{mp3}.
\begin{proposition}\label{mprop}
Let
assumptions of 
Lemma \ref{ldlam}
%
%
%
be fulfilled and assume that
condition
\eqref{dist} holds true.
%
%
%
For  a fixed 
$\lambda \in (0,1]$, 
any solution $u \in \parspcont \cap \pspace $
to problem
\eqref{mp3}
%
%
%
satisfies   
the following estimate
\begin{equation}\label{apr1bis}
\begin{split}
\sup_{0<t<T}
 \int_\Omega |u  (\cdot,t) |^2\, \dx x
+ 
\int_{\Omega_T}
| \nabla u  |^p
\, \dx x \dx t
 \le
C  
\end{split}
\end{equation}
for some positive constant 
$C$ depending  only on $N$, $p$, $\alpha$, 
and on the norms
$
\|b\| _{L^p  (\Omega_T) }
$,
$
\|b\| _{L^\infty\left(0,T,L^{N,\infty} (\Omega) \right)}
$, 
$ \|
u_0
\| _{L^2(\Omega)}
$, $ 
\|
H
\| _{L^1(\Omega_T)}
$, 
$\|
f
\| _{\dual}
$.
\end{proposition}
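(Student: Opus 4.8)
The plan is to test the equation in \eqref{mp4} (equivalently \eqref{mp3}) with $u$ itself — more precisely with $w\chi_{(0,t)}$, where $w=u/\lambda$ — and use the structure conditions \eqref{1.3} together with the Sobolev embedding in Lorentz spaces (Theorem \ref{lorentz}) to absorb the singular lower order term. First I would use Lemma \ref{incl}, and the integration-by-parts identity \eqref{tid}, to justify the test and obtain, for a.e.\ $t\in(0,T)$,
\begin{equation*}
\frac 1 2 \int_\Omega |w(\cdot,t)|^2\,\dx x
+\int_{\Omega_t} A(x,s,\lambda w,\nabla w)\cdot\nabla w\,\dx x\dx s
=\frac 1 2 \int_\Omega |u_0|^2\,\dx x
+\int_0^t \langle f,w\rangle\,\dx s .
\end{equation*}
Applying the coercivity \eqref{1.3} to the second term on the left gives a good term $\alpha\int_{\Omega_t}|\nabla w|^p$, a bad term $\int_{\Omega_t}(b|w|)^p$, and the harmless $\|H\|_{L^1(\Omega_T)}$.

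The heart of the matter is to control $\int_{\Omega_t}(b|w|)^p$ by a small fraction of $\alpha\int_{\Omega_t}|\nabla w|^p$ plus lower order terms. This is exactly where condition \eqref{dist} enters: I would split $b=(b-g)+g$ with $g=\mathcal T_k b\in L^\infty(0,T;L^\infty(\Omega))$ chosen, via \eqref{distlim}, so that $\esssup_{t}\|b-g\|_{L^{N,\infty}(\Omega)}<\alpha^{1/p}/S_{N,p}$, which is possible by \eqref{dist}. For the $(b-g)$ part, fix $t$, apply the Hölder inequality \eqref{holder} in the pair $L^{N,\infty},L^{\frac{N}{N-1},1}$ on the space slice together with $|w|^p\in L^{\frac{N}{N-1},1}$? — more directly, I would write $\|(b-g)w\|_{L^p(\Omega)}\le \|b-g\|_{L^{N,\infty}(\Omega)}\|w\|_{L^{p^*,p}(\Omega)}$ (a standard Lorentz–Hölder product estimate, since $\frac1N+\frac{1}{p^*}=\frac1p$ and the second indices combine suitably) and then invoke \eqref{Sobolevin} to bound $\|w\|_{L^{p^*,p}(\Omega)}\le S_{N,p}\|\nabla w\|_{L^p(\Omega)}$. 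Raising to the $p$-th power and integrating in $s$ yields
\begin{equation*}
\int_{\Omega_t}\big((b-g)|w|\big)^p\,\dx x\dx s
\le \big(\mathscr D_b\, S_{N,p}+\epsilon\big)^p\int_{\Omega_t}|\nabla w|^p\,\dx x\dx s,
\end{equation*}
with $\epsilon>0$ as small as we like; by \eqref{dist} the constant is strictly less than $\alpha$, so this term is absorbed into the left-hand side. For the bounded part $g$, since $\|g\|_\infty$ is controlled by $\|b\|_{L^\infty(0,T;L^{N,\infty}(\Omega))}$ (and the truncation level $k$, which itself depends only on $\mathscr D_b$ and hence on the $L^{N,\infty}$–norm), one gets $\int_{\Omega_t}(g|w|)^p\le C\int_{\Omega_t}|w|^p$, and then either Gronwall's inequality in $t$ closes the estimate, or — cleaner, and the route I would take — one first uses Lemma \ref{ldlam} to get the uniform bound $\sup_t|\{|u(\cdot,t)|>k\}|\le \Psi(k)M_0$, i.e.\ a quantitative control on the measure of super-level sets independent of $\lambda$, which by a standard interpolation between this weak estimate and the energy converts $\int_{\Omega_t}|w|^p$ into $\epsilon\int_{\Omega_t}|\nabla w|^p+C$. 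The right-hand side term $\int_0^t\langle f,w\rangle$ is handled by the duality bound $|\langle f,w\rangle|\le \|f\|_{W^{-1,p'}}\|w\|_{W^{1,p}_0}$ followed by Young's inequality to produce $\epsilon\|\nabla w\|_{L^p(\Omega)}^p+C\|f\|^{p'}$.

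Collecting everything, one arrives at
\begin{equation*}
\frac 1 2 \sup_{0<t<T}\int_\Omega |w(\cdot,t)|^2\,\dx x
+c_0\int_{\Omega_T}|\nabla w|^p\,\dx x\dx t\le C
\end{equation*}
with $c_0>0$ and $C$ depending only on the stated data, and then recalling $u=\lambda w$ with $\lambda\in(0,1]$ immediately gives \eqref{apr1bis} (the $\lambda$'s only help). The main obstacle is the absorption step for $(b|w|)^p$: one must be careful that the Lorentz–Hölder product inequality is applied with the correct second exponents so that \emph{exactly} the constant $\mathscr D_b S_{N,p}$ — and not something larger — appears, since the whole theorem hinges on this being $<1$; and one must verify that the truncation level $k$ (hence $\|g\|_\infty$) can be fixed using only $\mathscr D_b$ and the ambient norms, so that no circularity with the energy bound arises. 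A secondary technical point is the rigorous justification of using $u$ (not merely a smooth $\varphi$) as a test function, which is supplied by the $W_p(0,T)$ theory recalled in Lemma \ref{incl}, in particular identity \eqref{tid}.
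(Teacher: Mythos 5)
Your proposal follows essentially the same route as the paper's proof: set $w=u/\lambda$, test \eqref{mp4} with $w\chi_{(0,t)}$, invoke the coercivity \eqref{1.3}, split $b$ into a truncated bounded part plus a remainder of small $L^\infty(0,T;L^{N,\infty})$-norm (possible by \eqref{dist} via \eqref{distlim}), absorb the remainder via Lorentz--H\"older and \eqref{Sobolevin}, control the bounded part via the weak-type estimate of Lemma \ref{ldlam}, and handle $f$ with Young. There is, however, one small but substantive inaccuracy. The bad term produced by \eqref{1.3} evaluated at $(x,s,\lambda w,\nabla w)$ is $(\lambda b|w|)^p=(b|u|)^p$, not $(b|w|)^p$, and the paper uses this carefully: it estimates $\|b|u|\|_{L^p(\Omega_t)}$ entirely in terms of $u$ --- the superlevel sets $\{|u|>k\}$ and $\nabla u$ --- so that Lemma \ref{ldlam}, which bounds $|\{|u(\cdot,t)|>k\}|$ and not $|\{|w(\cdot,t)|>k\}|$, yields a constant $C\,\Psi(k)^{p/N}$ that is \emph{uniform in $\lambda\in(0,1]$}; only afterwards is $\|\nabla u\|_{L^p}\le\|\nabla w\|_{L^p}$ invoked so that the absorption lands on $\alpha\|\nabla w\|^p_{L^p(\Omega_t)}$ (see \eqref{4.12b}--\eqref{4.12b5} and \eqref{test27n}). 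Your version bounds $\int(g|w|)^p\le\|g\|_\infty^p\int|w|^p$ and then tries to convert $\int|w|^p$ via the weak estimate, but $\{|w|>k'\}=\{|u|>\lambda k'\}$ and $\Psi(\lambda k')\to\infty$ as $\lambda\to 0^+$, so the resulting smallness is not uniform in $\lambda$ --- and this uniformity (cf.\ Remark \ref{rem11}) is exactly what the Leray--Schauder argument in Section \ref{bsec} requires. The repair is simply to carry $b|u|$ rather than $b|w|$ through the splitting step. A further, minor bookkeeping difference: for the sublevel contribution the paper keeps $k^p\int_{\{|u|\le k\}}b^p\le k^p\|b\|^p_{L^p(\Omega_T)}$ (hence the stated dependence on $\|b\|_{L^p(\Omega_T)}$), whereas you use $(k')^p|\Omega_T|$; on a bounded domain these are interchangeable.
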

\begin{proof}
We set $w:= u/\lambda$, in such a way that $w$ solves the problem \eqref{mp4}. 
We fix $t \in (0,T)$  and we  choose   $ \varphi:= w \chi_{(0,t)} $
as a test   function for 
\eqref{mp4}
so we get
\begin{equation}\label{test1b}
\begin{split}
&
\frac 1 2
\|w(\cdot,t)\|^2_{L^2(\Omega)}
+
\int_{\Omega_{t}}
 A(x,s, \lambda  w,\nabla w) \cdot \nabla  w 
\,
\dx x \dx s 
=
\frac 1 2
\|u_0  \|^2_{L^2(\Omega)}
+
\int_0^{t} 
\left\langle
f
,
w 
\right\rangle
\dx s 
\end{split}
\end{equation}
By means of Young inequality 
we have for $0<\varepsilon <1$
\begin{equation}
\begin{split}
 \int_0^{t} 
\left\langle
f
,
w
\right\rangle
\,
\dx s 
&\le 
\varepsilon 
\int_{\Omega_t}
{|\nabla  w|^p}
 \dx x\dx s
+
C(\varepsilon)  \|f\|^{p^\prime}_{\dual} 
\end{split}
\end{equation}
By 
\eqref{1.3} we further have
\begin{equation}\label{test23}
\begin{split} 
\frac 1 2 \| w(\cdot,t) \| ^2_{L^2(\Om)}
&
+\alpha
\int_{\Omega_t}
|\nabla w|^p 
\,\dx x\dx s
 \le
\frac 12 \|  u_0\| ^2_{L^2(\Om)}
+
\varepsilon 
\int_{\Omega_t}
{|\nabla  w|^p}
\,\dx x\dx s
\\
&   
+
C(\varepsilon)  \|f\|^{p^\prime}_{\dual} 
+
\int_{\Omega_t}
\left(
 { \lambda b  |w| } 
\right)^p
\dx x\dx s
+
\int_{\Omega_t}
{H(x,s)}
\,\dx x\dx s
\end{split}
\end{equation}
Now, we provide an estimate on  $
\left\|
 {  \lambda b  |w| } 
\right\|
_{L^p(\Omega_t)}
=
\left\|
 {    b  |u| } 
\right\|
_{L^p(\Omega_t)}
$.
By Minkowski 
inequality we have
\begin{equation}\label{4.12b}
\begin{split}
\left\|
 {   b   |u| } 
\right\|
_{L^p(\Omega_t)}
&\le
\left\|
 {(\mathcal T_m b) |u| } 
\right\|
_{L^p(\Omega_t)}
+
\left\|
 {\left(   b   -  \mathcal T_m b \right)|u| } 
\right\|
_{L^p(\Omega_t)}
\end{split}
\end{equation}
Here $\mathcal T_m b$ denotes the truncation $b$ at levels $\pm m$.
We estimate separately the latter two terms. 
For $k>0$ fixed, we have 
\begin{equation}\label{4.12b1}
\begin{split}
\left\|
 {(\mathcal T_m b) |u| } 
\right\|^p
_{L^p(\Omega_t)}
\le
m^p \int_0^t  \dx s \int_{ \{|u (\cdot,s)|>k\} } |u|^p\,\dx x + 
k^p
\int_0^t  \dx s \int_{ \{|u (\cdot,s)|\le k\} } (b(x,s)  )^p\,\dx x
\end{split}
\end{equation}
In particular, we apply H\"older  inequality
\eqref{holder}, Sobolev inequality 
\eqref{Sobolevin}
slice--wise and Lemma \ref{ldlam} to get 
\begin{equation}\label{4.12b2}
\begin{split}
\int_0^t  \dx s \int_{ \{|u (\cdot,s)|>k\} } |u|^p\,\dx x 
& =  \int_0^t  \dx s \int_\Om  |u \chi_{ \{|u (\cdot,s)|>k\} } |^p\,\dx x
\\
&
\le \int_0^t  
\|
\chi_{ \{|u (\cdot,s)|>k\} }
\|^p_{L^{N,\infty}(\Omega)}
\|
u 
\|^p_{L^{p^\ast,p}(\Omega)}
\,\dx s
\\& \le
C
\left(
 {\Psi( k)}
\right)^{p/N}
\int_{\Omega_t} |\nabla u|^p\,\dx x \dx s
\end{split}
\end{equation}
for some constant $C$
only
depending on $N, p, \alpha $ and the constant  $M_0$  in 
\eqref{K0}. 
A similar combination of H\"older  inequality and  Sobolev inequality 
yields
\begin{equation}\label{4.12b3}
\begin{split}
\left\|
 {\left( b   -  \mathcal T_m b   \right)|u| } 
\right\|^p
_{L^p(\Omega_t)}
\le
S_{N,p}^p
\|
b  - \mathcal T_m  b  
\|_{L^\infty\left(0,T,L^{N,\infty} (\Omega) \right)}^p
\int_{\Omega_t} |\nabla u|^p\,\dx x \dx s
\end{split}
\end{equation}  
Inserting
\eqref{4.12b2}
and
\eqref{4.12b3}
into 
\eqref{4.12b}
we obtain
\begin{equation}\label{4.12b5}
\begin{split}
\left\|
 {b |u| } 
\right\|
_{L^p(\Omega_t)}
&\le 
\left[
C {\Psi( k)^{1/N}} +  S_{N,p}  \|
b  - \mathcal T_m  b  
\|_{L^\infty\left(0,T,L^{N,\infty} (\Omega) \right)} \right]
\|
\nabla u
\|_{L^p(\Omega_t)}
\end{split}
\end{equation}
Observe that \eqref{test23} 
and
\eqref{4.12b1}
imply
\begin{equation}\label{test27n}
\begin{split}
 \alpha^{1/p}
\|
\nabla w
\|_{L^p(\Omega_t)}
\le 
\left(
\frac 1 2
\right)^{1/p}
 \|  u_0\| ^{2/p}_{L^2(\Om)}
+
k  \|b\| _{L^p(\Om_T)}
+
C(\varepsilon,p)  \|f\|^{p^\prime/p}_{\dual} 
\\
+
\left[ \varepsilon^{1/p} +  C
  {\Psi( k)^{1/N}} +  S_{N,p}  \|
b  - \mathcal T_m  b  
\|_{L^\infty\left(0,T,L^{N,\infty} (\Omega) \right)} \right]
\|
\nabla w
\|_{L^p(\Omega_t)}
+
\|
H
\|^{1/p}_{L^1(\Omega_T)}
\end{split}
\end{equation}
We are able to reabsorb by the left hand side
by choosing properly $m$, 
$k$  and $\varepsilon$. 
For instance, it is sufficient to have  $m$ so large 
to guarantee
\begin{equation}\label{dcons}
  \|
b  - \mathcal T_m  b  
\|_{L^\infty\left(0,T,L^{N,\infty} (\Omega) \right)} < \frac {\alpha^{1/p}}{S_{N,p}}
\end{equation}
and  the existence of such value of $m$ 
is a direct consequence of \eqref{dist}.  
A proper choice of $k$ and $\varepsilon$ can be performed 
coherently with 
\eqref{dcons} and taking into account the properties of $\Psi(\cdot)$.
In particular, denoting by $C$ a constant depending   only on $N$, $p$, $\alpha$  and on 
$
\|b\| _{L^p  (\Omega_T) }
$, 
$
\mathscr D_b
$,
$ \|
u_0
\| _{L^2(\Omega)}
$, $ 
\|
H
\| _{L^1(\Omega_T)}
$, 
$\|
f
\| _{\dual}
$, 
we deduce from \eqref{test27n}
\begin{equation}\label{test30}
\begin{split}
 \|w(\cdot,t) \| ^2_{L^2(\Om)}
+
\|
\nabla w
\|^p_{L^p(\Omega_t)}
\le
C
\end{split}
\end{equation}
Taking into account \eqref{test30}  
and recalling that $\lambda \in (0,1]$,  
 the
latter estimate leads to the conclusion of the proof.
\end{proof}

\begin{remark}\label{rem11}
We point out that 
the a priori estimate \eqref{apr1bis}
is
uniform   with respect to the parameter $\lambda$. 
\end{remark}


\section{Parabolic equations with bounded coefficients}\label{bsec}
This section is devoted to the proof of the existence of a solution to problem \eqref{mp}
in the special case 
$b \in L^\infty(\Omega_T)$. We    shall use on this account
the following  version of 
 Leray--Schauder fixed point theorem 
as in 
(see  e.g. 
\cite[Theorem 11.3 pg. 280]{gt}).
\begin{theorem}
\label{LerSch2}
Let $\mathcal F$ be a compact mapping of a Banach space $X$ into itself, and suppose there exists a constant $M$ such that $\|x\|_{X}<M$ for all $x\in X$ and $\lambda\in [0,1]$ satisfying $x=\lambda\mathcal F(x).$ Then, $\mathcal F$ has a fixed point. 
\end{theorem}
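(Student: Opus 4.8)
The plan is to obtain Theorem~\ref{LerSch2} (Schaefer's form of the Leray--Schauder alternative) from Schauder's fixed point theorem by a radial truncation argument, arranged so that the \emph{a priori} bound $\|x\|_X<M$ is used exactly once, to kill one branch of a dichotomy.

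First I would fix the constant $M$ furnished by the hypothesis, set $B:=\{x\in X:\|x\|_X\le M\}$, and introduce the radial retraction $r\colon X\to B$ given by $r(y)=y$ if $\|y\|_X\le M$ and $r(y)=My/\|y\|_X$ if $\|y\|_X>M$. A short computation gives $\|r(y)-r(z)\|_X\le 2\|y-z\|_X$, so $r$ is (Lipschitz) continuous and is the identity on $B$. I would then consider $\mathcal G:=r\circ\mathcal F\colon B\to B$. Since $\mathcal F$ is compact --- continuous and carrying bounded sets into relatively compact ones --- and $r$ is continuous, $\mathcal G$ is a continuous self-map of the nonempty closed bounded convex set $B$ whose image satisfies $\mathcal G(B)\subseteq r(\overline{\mathcal F(B)})$, hence is relatively compact. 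Thus Schauder's fixed point theorem applies and yields $x^\ast\in B$ with $r(\mathcal F(x^\ast))=x^\ast$.

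Next I would split into two cases. If $\|\mathcal F(x^\ast)\|_X\le M$, then $r$ acts as the identity at $\mathcal F(x^\ast)$, so $x^\ast=\mathcal F(x^\ast)$ and we are done. If instead $\|\mathcal F(x^\ast)\|_X>M$, then
\[
x^\ast=r(\mathcal F(x^\ast))=\lambda\,\mathcal F(x^\ast),\qquad \lambda:=\frac{M}{\|\mathcal F(x^\ast)\|_X}\in(0,1),
\]
and in particular $\|x^\ast\|_X=M$. But $x^\ast=\lambda\mathcal F(x^\ast)$ with $\lambda\in[0,1]$, so the hypothesis of the theorem forces $\|x^\ast\|_X<M$, contradicting $\|x^\ast\|_X=M$. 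Hence the second case is impossible, and $x^\ast$ is the desired fixed point of $\mathcal F$.

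The only substantial ingredient is Schauder's fixed point theorem itself (which in turn rests on Brouwer's theorem via finite-dimensional approximation of compact operators); the remaining steps are elementary bookkeeping. The one spot deserving a line of care is the estimate $\|r(y)-r(z)\|_X\le 2\|y-z\|_X$ near the sphere $\|y\|_X=M$, which secures the continuity and hence the compactness of $\mathcal G$; beyond quoting Schauder's theorem in the correct form I do not anticipate any real obstacle.
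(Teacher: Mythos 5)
The paper does not prove this theorem but simply cites it from Gilbarg--Trudinger (Theorem~11.3, p.~280), and your argument reproduces precisely the standard proof given there: truncate radially to a self-map of the closed ball of radius $M$, apply Schauder's fixed point theorem, and use the a~priori bound $\|x\|_X<M$ to exclude the boundary case $\|\mathcal F(x^\ast)\|_X>M$. Everything checks out, including the Lipschitz estimate $\|r(y)-r(z)\|_X\le 2\|y-z\|_X$ for the radial retraction and the relative compactness of $\mathcal G(B)\subseteq r\bigl(\overline{\mathcal F(B)}\bigr)$.
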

We  recall that
a
continuous mapping between two Banach spaces is called compact if the images of bounded sets are precompact. 

\medskip

Accordingly,  
the main result of this section  reads as follows.
\begin{theorem} \label{ttt2}
Let assumptions 
\eqref{1.3},
\eqref{1.4} and
\eqref{1.5}
be in charge 
and $b \in L^\infty(\Omega_T)$.
Then
problem  \eqref{mp} admits  a solution.  
\end{theorem}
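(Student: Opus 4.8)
The plan is to apply the Leray--Schauder fixed point theorem (Theorem \ref{LerSch2}) in the Banach space $X = L^p(\Omega_T)$ (or more precisely a space in which $W_p(0,T)$ embeds compactly, as furnished by Aubin--Lions). First I would define the solution map $\mathcal F$ as follows: given $v \in L^p(\Omega_T)$, let $u = \mathcal F(v)$ be the unique weak solution of the \emph{linear-in-the-leading-term-frozen} Cauchy--Dirichlet problem
\begin{equation*}
\left\{
\begin{array}{rl}
& u_t - \divergenza A(x,t,v,\nabla u) = f \qquad\text{in } \Omega_T, \\
\, \\
& u = 0 \qquad\text{on } \partial\Omega\times(0,T), \\
\, \\
& u(\cdot,0) = u_0 \qquad\text{in }\Omega.
\end{array}
\right.
\end{equation*}
Since $b \in L^\infty(\Omega_T)$, the lower order contributions $\left(b|v|\right)^{p}$ and $\left(b|v|\right)^{p-1}$ appearing in \eqref{1.3} and \eqref{1.5} are honest $L^1(\Omega_T)$ and $L^{p'}(\Omega_T)$ functions respectively, so the frozen operator $\xi \mapsto A(x,t,v(x,t),\xi)$ satisfies the standard Leray--Lions structure conditions with a genuine right-hand side in $\dual$. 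Hence existence and uniqueness of $u \in W_p(0,T)$ follows from the classical theory of monotone parabolic operators (Lions); this defines $\mathcal F$ unambiguously.

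Next I would verify the three hypotheses of Theorem \ref{LerSch2}. \emph{Compactness of $\mathcal F$}: from the energy estimate for the frozen problem (test with $u$, use \eqref{1.3}, Young's inequality, and boundedness of $b$ together with Lemma \ref{incl} to control the $L^2$ and $L^p$-gradient norms), one gets a bound on $\|u\|_{W_p(0,T)}$ in terms of $\|v\|_{L^p(\Omega_T)}$, and then, from the equation itself and \eqref{1.5}, a bound on $\|u_t\|_{\dual}$. Thus $\mathcal F$ maps bounded sets of $L^p(\Omega_T)$ into bounded sets of $W_p(0,T)$, which is compactly embedded into $L^p(\Omega_T)$ (the last Lemma of Section \ref{pre}). \emph{Continuity of $\mathcal F$}: if $v_n \to v$ in $L^p(\Omega_T)$, then along a subsequence $v_n \to v$ a.e., and the corresponding solutions $u_n$ are bounded in $W_p(0,T)$; extracting a further subsequence with $u_n \rightharpoonup u$ weakly in $\pspace$, $u_n \to u$ in $L^p(\Omega_T)$ and a.e., one passes to the limit in the weak formulation. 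The only delicate point is identifying the weak limit of $A(x,t,v_n,\nabla u_n)$; this is handled by the standard Minty/monotonicity argument (test with $u_n - u$, use \eqref{1.4}, and the a.e.\ convergence of $v_n$ with the Carath\'eodory continuity of $A$), which also upgrades $\nabla u_n \to \nabla u$ strongly in $L^p$ so the full sequence converges. \emph{A priori bound for fixed points of $\lambda\mathcal F$}: if $u = \lambda \mathcal F(u)$ with $\lambda \in (0,1]$, then $u$ solves precisely problem \eqref{mp3} with $v = u$ frozen inside, i.e.\ $u_t/\lambda - \divergenza A(x,t,u,\nabla u/\lambda) = f$ — here the key algebraic check is that replacing $u$ by $\lambda \mathcal F(u)$ and rescaling reproduces \eqref{mp3}. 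Proposition \ref{mprop} then delivers $\|u\|_{L^p(\Omega_T)} \le \|u\|_{\parspcont \cap \pspace}^{} \le C$ with $C$ independent of $\lambda$ (Remark \ref{rem11}), which is exactly the required uniform bound $M$.

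Having checked all hypotheses, Theorem \ref{LerSch2} yields a fixed point $u = \mathcal F(u) \in W_p(0,T) \subset \parspcont \cap \pspace$, and by construction $u$ is a weak solution of \eqref{mp} in the sense of Definition \ref{defsolu} (the integral identity \eqref{ds} being equivalent to the distributional formulation of the frozen problem with $v = u$, using \eqref{tid} to recover the initial datum). The main obstacle I anticipate is the continuity of $\mathcal F$: one must simultaneously pass to the limit in the nonlinear term $A(x,t,v_n,\nabla u_n)$ where \emph{both} the frozen argument $v_n$ and the gradient $\nabla u_n$ vary, which requires combining a.e.\ convergence of $v_n$ with a strong-gradient-convergence argument of Boccardo--Murat type (testing with $u_n - u$ and exploiting strict monotonicity \eqref{1.4}); the non-coercive structure (the $-\left(b|u|\right)^p$ term in \eqref{1.3}) must be absorbed using the $L^\infty$-bound on $b$ and Gronwall, but since $b$ is bounded here this is routine. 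A secondary technical point is ensuring uniqueness for the frozen linear problem so that $\mathcal F$ is well-defined and single-valued, which follows from strict monotonicity \eqref{1.4} by the usual subtraction-of-two-solutions argument combined with \eqref{tid}.
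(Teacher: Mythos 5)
Your proposal is correct and mirrors the paper's argument essentially step for step: the same fixed-point map $\mathcal F$ on $L^p(\Omega_T)$ defined by freezing $v$, the same use of Lions' monotone operator theory plus strict monotonicity \eqref{1.4} for well-posedness of the frozen problem, the same verification of the Leray--Schauder hypotheses (Aubin--Lions for compactness, a Minty-type monotonicity argument for continuity, and — crucially — the observation that $u=\lambda\mathcal F(u)$ is equivalent to $u$ solving \eqref{mp3}, so Proposition \ref{mprop} with its $\lambda$-uniformity gives the a priori bound). The only cosmetic deviation is that you invoke a Boccardo--Murat strong-gradient-convergence upgrade in the continuity step, whereas the paper stops at the Minty identification of the weak limit of $A(x,t,v_n,\nabla u_n)$; both are sufficient here.
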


\begin{proof}
We let  $v \in  L^p(\Omega_T)$, and consider the  problem
\begin{equation}\label{mp2}
\left\{
\begin{array}{rl}
&  
u_t     - 
\divergenza  A(x,t,v, \nabla u)
 = 
f
\qquad\text{in $  \Omega_{T }$},
  \\
\, \\
&   u  = 0    
\qquad\text{on $\partial \Omega    \times (0,T)  $}
, \\
\, \\
& u (\cdot,0) = u_0     
\qquad\text{in $  \Omega    $}
, \\
\end{array}
\right.
\end{equation}
Problem \eqref{mp2} admits a solution by the classical theory of pseudomonotone operators (\cite{L}) and 
by the strict monotonicity of the vector field
 $$(x,t,\xi)\in \Omega_T\times \mathbb R^N\mapsto \bar A(x,t,\xi)  :=A(x,t,v,\xi)$$
such a solution is unique. 
Hence, the map $\mathcal F$ which takes $v$ to the solution $u$ is well defined and certainly acts from $L^p(\Om_T)$ into itself. 
Our goal is to determine a fixed point for $\mathcal F$, which is obviously a solution to \eqref{mp} under the 
assumption
$b \in L^\infty(\Omega_T)$. 
We want to apply Theorem
\ref{LerSch2}, so   
we need to show that 
$\mathcal F \colon L^p(\Om_T)\rightarrow L^p(\Om_T)$ is continuous, compact and the set
\[
\mathcal U : =  \left\{
u \in   
L^p(\Om_T)
\colon \,  u = \lambda  \mathcal F[u] \quad \text{for some $\lambda \in [0,1]$}
\right\}
\]
is bounded in $L^p(\Om_T)$. First observe that  the boundedness of $\mathcal U$ is  a direct consequence of 
Proposition \ref{mprop}.
%
%

\medskip

Let us prove that  $\left\{\mathcal F[v_n]\right\}_{n \in \mathbb N}$ is a precompact sequence if  $\{v_n\}_{n \in \mathbb N}$ is  a bounded sequence in $L^p(\Om_T)$. We need to show that $u_n:=\mathcal F [v_n]$ admits a subsequence strongly converging in $L^p(\Om_T)$. 
By definition of $\mathcal F$, we see that $u_n$ solves the problem
\begin{equation}\label{mp2n}
\left\{
\begin{array}{rl}
&  
\partial_t u_n     - 
\divergenza  A(x,t,v_n, \nabla u_n)
 = 
f
\qquad\text{in $  \Omega_{T }$},
  \\
\, \\
&   u_n = 0    
\qquad\text{on $\partial \Omega    \times (0,T)  $}
, \\
\, \\
& u_n (\cdot,0) = u_0     
\qquad\text{in $  \Omega    $}
, \\
\end{array}
\right.
\end{equation}
By testing the equation in \eqref{mp2n} by $u_n$ itself and arguing similarly as before, we see that 
\begin{equation}\label{apr50}
\begin{split}
\sup_{0<t<T}
&  \int_\Omega |\spp  (\cdot,t) |^2\, \dx x
 + 
\int_{\Omega_T}
| \nabla \spp   |^p
\, \dx x \dx t
\\
& \le
C  \left[
\|
u_0
\|^2_{L^2(\Omega)}
+
\|
H
\| _{L^1(\Omega_T)}
 +
 \|b\|^p_{L^\infty(\Omega_T)}
\|v_n\|^p_{L^p(\Omega_T)}+
\|
f
\|^{p^\prime}_{\dual}
\right]
\end{split}
\end{equation}
So in particular $\{|\nabla u_n|\}_{n\in \mathbb N}$ is bounded in $L^p(\Om_T)$.
Using 
the equation in 
\eqref{mp2} we see that 
$\{\partial_t \spp\}_{n \in \mathbb N}$ is  bounded 
in $\dual$ and so $\{\spp\}_{n \in \mathbb N}$ strongly converges in $L^p(\Om_T)$
as a
direct consequence
of the Aubin--Lions lemma. 

\medskip

Let us prove the continuity of $\mathcal F$. 
 Let $\{v_n\}_{n \in \mathbb N}$ be a strongly converging sequence  in $L^p(\Om_T)$, say  
\[
v_n
\rightarrow
v
\qquad \text{in $L^p(\Om_T)$ strongly}
\]
We set 
$u_n:=\mathcal F [v_n]$. 
We already know that $\{u_n\}_{n \in \mathbb N}$  is compact sequence
in $L^p(\Om_T)$
and also that estimate 
\eqref{apr50}
holds true. So, there exists $u \in W_p(0,T)$ such that
\begin{align}
 u_n
& \rightarrow
 u 
\qquad \text{strongly in $L^p(\Om_T)$}
 \label{70d}  \\
\nabla \spp & \rightharpoonup \nabla u \quad\text{weakly in $L^p\left(\Omega_T,\mathbb R^N\right)$} \label{71d}  \\
\spp & \rightharpoonup^\ast  u  \quad\text{weakly$^{\ast}$ in $L^\infty(0,T;L^2(\Omega))$} 
\label{72d}\\
\partial _ t \spp & \rightharpoonup \partial _ t u  \quad\text{weakly  in $\dual$} 
\label{73d}
\end{align}
Observe further that $u \in C^0\left([0,T],L^2(\Om)\right)$ with $u(\cdot,0)=u_0$. It is a direct consequence of the inclusion of Lemma \ref{incl}, the boundednes of $\{u_n\}_{n \in \mathbb N} $
in $W_p(0,T)$
 and of the convergence $u_n\rightharpoonup u$   weakly in $L^2(\Om)$ for all $t \in[0,T]$.
In order to 
prove the
continuity of $\mathcal F$, 
 we need to show that 
\begin{equation}\label{ffff}
u =\mathcal F [v ]
\end{equation}
Again, we know that $u_n$ solves \eqref{mp2n}, namely for every 
$\varphi \in C^\infty(\Om_T)$ with  support contained in $[0,T)\times\Om$ we have 
\begin{equation}\label{dsn}
\int_0^T
\langle \partial_t u_n,\varphi \rangle
\dx s
+
\int_0^T \int_\Omega A(x,s,v_n,\nabla u_n)\cdot \nabla \varphi \dx x \dx s
=
\int_0^T 
 \langle f,  \varphi \rangle 
\,\dx s
\end{equation} 
If we choose $u_n - u$ as a test function in the above identity we have
\begin{equation}\label{test51}
\begin{split}
\frac 1 2 \|u_n(t)-u(t)\|^2_{L^2(\Om)} 
& +
\int_0^T
\langle \partial_t u ,u_n-u \rangle
\dx s
\\
& + \int_0^T \int_\Omega A(x,s,v_n,\nabla u_n)\cdot \nabla (u_n-u) \dx x \dx s
= \int_0^T 
 \langle f,  u_n-u \rangle 
\,\dx s
\end{split}
\end{equation} 
Now, it is clear that
\begin{equation}\label{test52}
\begin{split}
\limsup_{n\rightarrow \infty}  \int_0^T \int_\Omega A(x,t,v_n,\nabla u_n )\cdot \nabla (u_n-u) \dx x \dx t \le 0
\end{split}
\end{equation} 
From the boundedness of $b$, the sequence  
$
\{
A(x,s,v_n,\nabla u)
\}_{n\in \mathbb N}
$ strongly converges
in $L^{p^\prime}(\Om_T)$ and so
\[
\lim_{n\rightarrow \infty}  \int_0^T \int_\Omega A(x,t,v_n,\nabla u )\cdot \nabla (u_n-u) \dx x \dx t = 0
\]
The latter estimate, the  strict monotonicity of $A$
and 
%
 \eqref{test52}  give us
\begin{equation}\label{test52b}
\begin{split}
\lim_{n\rightarrow \infty}  \int_0^T \int_\Omega A(x,t,v_n,\nabla u_n )\cdot \nabla (u_n-u) \dx x \dx t = 0
\end{split}
\end{equation}

It is clear that 
$\{
A(x,s,v_n,\nabla u_n)
\}_{n\in \mathbb N}$ is bounded in $L^{p^\prime}(\Om_T)$ and so it weakly
converges  in 
$L^{p^\prime}(\Om_T)$ to some $\tilde A$. We use the Minty trick to recover that $\tilde A (x,t)=A(x,t,v(x,t),\nabla u (x,t))$ a.e. in $\Om_T$. Namely, let $\eta \in L^p(\Om_T,\mathbb R^N)$. Observe that
\begin{equation}\label{dsl22}
\begin{split}
0 \le \int_{\Om_T}  & \left[
A(x,t,v_n,\nabla u_n)-A(x,t,v_n,\eta)
\right]\cdot (\nabla u_n - \eta)\dx x \dx t
\\& =\int_{\Om_T}   \left[
A(x,t,v_n,\nabla u_n)-A(x,t,v_n,\nabla u )
\right]\cdot (\nabla u_n - \nabla u )\dx x \dx t
\\ & \qquad + 
\int_{\Om_T}   \left[
A(x,t,v_n,\nabla u )-A(x,t,v_n,\eta  )
\right]\cdot (\nabla u_n - \nabla u )\dx x \dx t
\\ & \qquad + 
\int_{\Om_T}   \left[
A(x,t,v_n,\nabla u_n )-A(x,t,v_n,\eta  )
\right]\cdot (\nabla u - \eta   )\dx x \dx t
\end{split}
\end{equation} 
Passing to the limit, we get
\begin{equation}\label{dsl223}
\begin{split}
0 \le   
\int_{\Om_T}   \left[\tilde A(x,t) -A(x,t,v ,\eta  )
\right]\cdot (\nabla u - \eta   )\dx x \dx t
\end{split}
\end{equation} 
We choose 
$\eta:= \nabla u - \lambda \psi$
in \eqref{dsl223}
 where $\psi \in L^p(\Om_T,\mathbb R^N)$ and $\lambda \in \mathbb R$.
Then 
\begin{equation}\label{dsl2233}
\begin{split}
   \lambda 
\int_{\Om_T}   \left[\tilde A(x,t) -A(x,t,v , \nabla u -  \lambda \psi  )
\right]\cdot   \psi \dx x \dx t \ge 0 
\end{split}
\end{equation} 
If we assume that $\lambda >0$, we divide by $\lambda$ itself and then letting $\lambda \rightarrow 0^+$
we have 
\begin{equation}\label{dsl226}
\begin{split}
\int_{\Om_T}   \left[\tilde A(x,t) -A(x,t,v , \nabla u    )
\right]\cdot   \psi \dx x \dx t \ge 0 
\end{split}
\end{equation} 
Arguing similarly if $\lambda <0$ we get the opposite inequality than 
\eqref{dsl226}, so we conclude that for every 
$\psi \in L^p(\Om_T,\mathbb R^N)$
\begin{equation}\label{dsl228}
\begin{split}
\int_{\Om_T}   \left[\tilde A(x,t) -A(x,t,v , \nabla u    )
\right]\cdot   \psi \dx x \dx t = 0 
\end{split}
\end{equation} 
i.e. $\tilde A (x,t)=A(x,t,v(x,t),\nabla u (x,t))$ a.e. in $\Om_T$.

\medskip

We are in position to 
pass to the limit in \eqref{dsn}. Therefore  
\begin{equation}\label{dsl}
-
\int_0^T  \int_\Om 
u   \partial _t    \varphi \dx s
+
\int_0^T \int_\Omega A(x,s,v ,\nabla u)\cdot \nabla \varphi \,dxds
+ \int_\Omega u_0 \varphi (x,0)\,\dx x
=
\int_0^T 
 \langle f,  \varphi \rangle 
\,\dx s
\end{equation} 
that is \eqref{ffff} holds. 
\end{proof}

\section{Proof of the existence result via  approximation scheme}\label{sec3}
\begin{proof}[Proof of Theorem \ref{ttt}]
Let $n \in \mathbb N$. 
We introduce the following initial--boundary 
value
problem
\begin{equation}\label{p1}
\left\{
\begin{array}{rl}
&  
\partial_t \spp   - 
\divergenza 
A(x,t,\theta_n \spp, \nabla  \spp   )
= 
f
\qquad\text{in $  \Omega_{T }$},
  \\
\, \\
&   \spp = 0    
\qquad\text{on $\partial \Omega    \times (0,T)  $}
, \\
\, \\
& \spp (\cdot,0) = u_0     
\qquad\text{in $  \Omega    $}
, \\
\end{array}
\right.
\end{equation}
where \[
\theta_n (x,t):= \frac {\mathcal T_n b(x,t)  }{b(x,t)} \qquad \text{for $(x,t)\in \Om_T$} 
\] and
\[ (x,t,u,\xi)\in \Omega_T\times \mathbb R \times \mathbb R^N\mapsto
A_n(x,t,u, \xi)
:=
A(x,t,\theta_n u, \xi   )
\]
To achieve the proof of Theorem \ref{ttt} we need to pass to the limit in \eqref{p1}.
%
%
%
%
%
%
%
The results of  Section  \ref{bsec}
provides the existence of   solution 
$$
\spp \in  \parspcont \cap \pspace
$$
to problem \eqref{p1}. 
In fact,      
$A_n=A_n(x,t,u,\xi)$  satisfies
\eqref{1.3},
\eqref{1.4} and
\eqref{1.5}
with 
$\mathcal T_n b $ in place of 
$ b $
and $\mathcal T_n b  $ belongs to $L^\infty(\Omega_T)$ for each fixed $n\in \mathbb N$.
Since 
 $\mathcal T_n b \le b$ in $\Omega_T$ for every $n \in \mathbb N$, by
 Proposition \ref{mprop},  there exists  a positive constant independent of $n$ 
%
such that 
the following   estimate for a solution to problem \eqref{p1}  holds
\begin{equation}\label{apr1ne}
\begin{split}
\sup_{0<t<T}
 \int_\Omega |\spp  (\cdot,t) |^2\, \dx x
+ 
\int_{\Omega_T}
| \nabla \spp   |^p
\, \dx x \dx t
 \le
C  
\end{split}
\end{equation}
Hence,   
there exists
$
u  \in \parspbound \cap \pspace   
$
such that
\begin{align}
\spp   & \rightharpoonup u \quad\text{weakly in $L^p(\Omega_T)$} \label{70}  \\
\nabla \spp & \rightharpoonup \nabla u \quad\text{weakly in $L^p\left(\Omega_T,\mathbb R^N\right)$} \label{71}  \\
\spp & \rightharpoonup^\ast  u  \quad\text{weakly$^{\ast}$ in $L^\infty(0,T;L^2(\Omega))$} 
\label{72}
\end{align}
as $n \rightarrow \infty$. 
With the aid of 
the equation in 
\eqref{p1}, we obtain 
a
uniform bound  for the norm of the time derivative of $u_n$ in 
$\dual$. 
Therefore, by
 Aubin--Lions
 lemma we have 
\begin{align}
\spp   & \rightarrow u \quad\text{strongly in $L^p(\Omega_T)$ and a.e. in $\Omega_T$} \label{70bis}  \\
\partial_t \spp & \rightharpoonup \partial_t u \quad\text{weakly in $\dual$} \label{71bis} 
\end{align}
Note also
that  $
u \in \parspcont
$  and $u(\cdot,0)=u_0$. As before, this is a consequence of  Lemma \ref{incl}, 
 the boundednes of $\{u_n\}_{n \in \mathbb N} $
in $W_p(0,T)$
 and of the convergence $u_n\rightharpoonup u$   weakly in $L^2(\Om)$ for all $t \in[0,T]$.

\medskip

In the last stage of our proof we want to pass   to the limit in \eqref{p1}. 
For  $z \in \mathbb R$, we set  
$$\gamma(z):=\int_0^z \frac {\dx \zeta} {1+ |\zeta|^{p}}$$ 
Obviously, $\gamma \in C^1(\mathbb R)$, $\gamma$ is odd, $|\gamma (z)|\leqslant |z|$ and $0 \leqslant \gamma^\prime(z) \leqslant 1$ for all $z \in \mathbb R$. In particular,
$\gamma$ is Lipschitz continuous in the whole of $\mathbb R$ and therefore 
$\gamma(u_n-u)\in \pspace$.
Moreover, since $\gamma(0)=0$ we deduce from \eqref{71}
and \eqref{70bis} 
\begin{align}
\label{23}
\gamma(u_n-u) & \rightharpoonup 0    \qquad \text{in $\pspace$ weakly}\\
\label{23ter}
\gamma(u_n-u)  & \rightarrow  0    \qquad \text{in $L^p(\Om_T)$ strongly and a.e. in $\Om_T$}
\end{align}
We observe that $\gamma(u_n-u)|_{t=0} = 0$.
Testing equation \eqref{p1} by  $\gamma(u_n-u)$ we get
\begin{equation}\label{test1}
\begin{split}
&
\int_\Omega
\Gamma(
 \spp-u  )|_{t=T}
  \dx x
+
\int_{\Omega_T}
 A_n(x,t, \spp,\nabla \spp) \cdot \nabla \gamma(u_n-u)
\,
\dx x \dx t 
\\
&\qquad =
-\int_0^T
\langle
\partial _ t u 
,
\gamma(
 \spp-u  )
\rangle \dx t
+
\int_0^T 
\left\langle
f
,
  \gamma(u_n-u)
\right\rangle
\,
\dx t
\end{split}
\end{equation}
where  
$\Gamma(z):=\int_0^z \gamma(\zeta)\dx\zeta$ for $z \in \mathbb R$.
Moreover, we have
$\nabla \gamma(u_n-u) = \gamma^\prime(u_n-u) (\nabla u_n - \nabla u)$.
%
From
\eqref{23}
it follows that
\begin{align}
\lim_{n\rightarrow \infty}
\int_0^T
\langle
\partial _ t u 
,
\gamma(
 \spp-u  )
\rangle \dx t & = 0 
\\
\lim_{n\rightarrow \infty}
\int_0^T 
\left\langle
f
,
  \gamma(u_n-u)
\right\rangle
\,
\dx t & =  0 
\end{align}
Then by \eqref{test1}
we   have
\begin{equation}\label{24}
\limsup_{n\rightarrow \infty}
\int_{\Omega_T} A_n(x,t,u_n,\nabla u_n) \nabla \gamma(u_n-u)\dx x \dx t \le   0\,.
\end{equation} 
We claim that
\begin{equation}\label{26}
\lim_{n\rightarrow \infty} \int_{\Omega_T} A_n(x,t,u_n,\nabla u) \nabla \gamma(u_n-u) \dx x \dx t = 0\,.
\end{equation}   
In view of \eqref{71}, to get \eqref{26}
it suffices to show that
\begin{equation}\label{compattezza}
\gamma'(u_n-u)\,A_n(x,t,u_n,\nabla u)= \frac { A_n(x,t,u_n,\nabla u)  }{1+|u_n-u|^{p}}\qquad \text{is compact in }L^{p'}(\Om_T,\mathbb R^N)\,.
\end{equation}
Preliminarily, we observe that  combining  \eqref{70bis} with the property that $\theta_n\rightarrow 1$ as $n\rightarrow \infty$, we have
\[
\frac { A_n(x,t,u_n,\nabla u)  }{1+|u_n-u|^{p}} \rightarrow  
A (x,t,u ,\nabla u) \qquad \text{a.e.\ in }\Omega\,.
\]
On the other hand, we see that
\[
\left|\frac { A_n(x,t,u_n,\nabla u)  }{1+|u_n-u|^{p}}\right|^{p'}\leqslant C(\beta,p )
\left[|\nabla u|^p+ K^{p^\prime} + (b|u|)^{p}+ b^{p} \right]
\] 
%
Now, from the monotonicity 
condition \eqref{1.4},
\eqref{24} 
and \eqref{26} we get
\begin{equation}\label{27}
\lim_{n\rightarrow \infty}
\int_{\Omega_T} \left[ A_n(x,t,u_n,\nabla u_n) -  A_n(x,t,u_n,\nabla u) \right]\nabla \gamma(u_n-u)  \dx x \dx t= 0\,.
\end{equation}
As the integrand is nonnegative, we have (up to a subsequence)
\[\left[ A_n(x,t,u_n,\nabla u_n) -  A_n(x,t,u_n,\nabla u) \right]\nabla \gamma(u_n-u)\to 0 \qquad \text{a.e.\ in $\Omega_T$.}\]
Moreover, since $\gamma'(u_n-u)\to1$ a.e.\ in $\Omega_T$, the above in turn implies
\begin{equation}\label{202005021}
\left[ A_n(x,t,u_n,\nabla u_n) -  A_n(x,t,u_n,\nabla u) \right]\,(\nabla u_n-\nabla u)\to 0
\qquad \text{a.e.\ in $\Omega_T$.}
\end{equation}
Arguing as in the proof of \cite[Lemma~3.3]{LL}, we see that
\begin{equation}\label{29.4}
\nabla u_n  \rightarrow  \nabla u \qquad \text{a.e.\ in $\Omega_T$}
\end{equation}
and
\begin{equation}\label{29.6}
A_n (x,t,u_n,\nabla u_n)
  \rightharpoonup
A  (x,t,u ,\nabla u )
 \qquad \text{in 
$L^{p^\prime} (\Omega_T,\mathbb R^N)$ weakly}
\end{equation}
and we conclude that $u$ solves 
the original problem~\eqref{mp}.
\end{proof}
We conclude this section providing an example which shows that 
assumption \eqref{dist} in general cannot be dropped, even for linear problems. 

\section{Asymptotic behavior}

This section is devoted to study the time behavior of a solution to problem \eqref{mp}. Through this section we assume that  \eqref{1.3}, 
\eqref{1.5} 
and \eqref{b} 
 are in charge  
and that
condition \eqref{dist} is satisfied. 
%
%

\medskip

For convenience we will
 denote by 
\begin{equation}\label{gdef}
g(t):= \|f(t)\|^{p^\prime}_{W^{-1,p^\prime}(\Omega)} + \|H(t)\|^{p}_{L^{ p }(\Omega)}+
 \|b(t)\|^{p}_{L^{ N,\infty }(\Omega)}
\end{equation}
for a.e. $t\in[0,T]$.
The first result of the present  section is the following
\begin{theorem}\label{asyt1}
Under the above assumptions, any solution $u$
to
problem \eqref{mp} satisfies
 for any $t \in  [0,T]$ the following estimate
\begin{equation}
\|u(t)\|^2_{L^2(\Omega)}   \le x (t)
\end{equation}
where $x (t)$ is the unique  solution of the problem 
\begin{equation} \label{c4}
\left\{
\begin{array}{rl}
&  
\dot{x} (t)  = - M_1  
(x (t) )^{p/2} +  C g(t)     
  \\
\, \\
&   x(0)= \|u_0\|^2_{L^2(\Omega)}    
\end{array}
\right. \qquad \mbox{ if }p>2
\end{equation}
or 
\begin{equation} \label{c5}
\left\{
\begin{array}{rl}
&  
\dot{x}(t) = - M_2  x(t)  +  C g(t)     
  \\
\, \\
&   x(0)= \|u_0\|^2_{L^2(\Omega)}    
\end{array}
\right. \qquad \mbox{ if }p\leq 2
\end{equation}

Here $C>0$ and $M_i>0, \,i=1,2,$ are  positive constants depending 
on $N$, $p$, $\alpha$, 
$\beta$, $ |\Omega |$ and
$
\mathscr  D_b.
$
\end{theorem}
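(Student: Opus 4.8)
The plan is to derive a differential inequality for the scalar function $\gamma(t):=\|u(\cdot,t)\|^2_{L^2(\Omega)}$ and then invoke the Gronwall-type comparison Lemmas \ref{lCau1} and \ref{lCau2}. First I would use Lemma \ref{incl} (the fact that $u\in W_p(0,T)$, which holds because of the a priori estimate and the equation), so that $\gamma$ is absolutely continuous and formula \eqref{tid} applies. Testing \eqref{ds} with $\varphi=u$ on the interval $(t_1,t_2)$ — this is legitimate by a density/approximation argument since $u\in\pspace$ and $u_t\in\dual$ — gives the energy identity
\begin{equation}\label{energyid}
\tfrac12\gamma(t_2)-\tfrac12\gamma(t_1)+\int_{t_1}^{t_2}\!\!\int_\Omega A(x,s,u,\nabla u)\cdot\nabla u\,\dx x\dx s=\int_{t_1}^{t_2}\langle f,u\rangle\,\dx s .
\end{equation}
By the coercivity assumption \eqref{1.3}, the left-hand integrand is bounded below by $\alpha|\nabla u|^p-(b|u|)^p-H$, and the dangerous term $(b|u|)^p$ is absorbed exactly as in the proof of Proposition \ref{mprop}: split $b=\mathcal T_m b+(b-\mathcal T_m b)$, apply the slice-wise Hölder and Sobolev inequalities \eqref{holder}, \eqref{Sobolevin}, and use \eqref{dist} to choose $m$ with $S_{N,p}\|b-\mathcal T_m b\|_{L^\infty(0,T;L^{N,\infty})}$ strictly below $\alpha^{1/p}$, leaving a good fraction of $\alpha\int|\nabla u|^p$. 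Together with Young's inequality on $\langle f,u\rangle$ this yields
\begin{equation}\label{step2}
\gamma(t_2)-\gamma(t_1)+c_0\int_{t_1}^{t_2}\!\!\int_\Omega|\nabla u|^p\,\dx x\dx s\le C\int_{t_1}^{t_2}g(s)\,\dx s
\end{equation}
for a positive constant $c_0$, with $g$ as in \eqref{gdef}.

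Next I would convert the gradient term into a term in $\gamma$ itself. Since $u(\cdot,s)\in W^{1,p}_0(\Omega)$, the Poincaré–Sobolev inequality gives $\|u(\cdot,s)\|_{L^p(\Omega)}\le c\|\nabla u(\cdot,s)\|_{L^p(\Omega)}$, hence $\int_\Omega|\nabla u|^p\,\dx x\gtrsim \|u(\cdot,s)\|_{L^p(\Omega)}^p$. When $p\ge2$, the embedding $L^2(\Omega)\hookrightarrow L^p(\Omega)$ is false in general, so instead I use $L^p\hookrightarrow L^2$ together with Hölder on the bounded domain to get $\|u\|_{L^2(\Omega)}\le |\Omega|^{1/2-1/p}\|u\|_{L^p(\Omega)}$, whence $\int_\Omega|\nabla u|^p\,\dx x\gtrsim |\Omega|^{-(p/2-1)}\gamma(s)^{p/2}$; this produces the exponent $p/2$ in \eqref{c4}. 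When $p\le2$ one uses instead Hölder the other way, $\|u\|_{L^p}\le|\Omega|^{1/p-1/2}\|u\|_{L^2}$, which only gives a lower bound on $\int|\nabla u|^p$ by a power of the $L^p$-norm, not directly by $\gamma$; the cleaner route for $p\le2$ is to combine $\int_\Omega|\nabla u|^p\gtrsim\|u\|_{L^{p^*}(\Omega)}^p$ with interpolation between $L^1$ (or $L^2$) and $L^{p^*}$ using the uniform bound $\sup_t\gamma(t)\le C$ from \eqref{apr1bis} to linearize, obtaining $\int_\Omega|\nabla u|^p\,\dx x\gtrsim M_2\,\gamma(s)$, which gives \eqref{c5}. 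Either way, \eqref{step2} becomes
\begin{equation}\label{gininq}
\gamma(t_2)-\gamma(t_1)+\int_{t_1}^{t_2}\psi(s,\gamma(s))\,\dx s\le C\int_{t_1}^{t_2}g(s)\,\dx s,\qquad 0\le t_1\le t_2\le T,
\end{equation}
with $\psi(s,a)=M_1 a^{p/2}$ if $p>2$ and $\psi(s,a)=M_2 a$ if $p\le2$ (extended by $0$ for $a\le0$).

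Finally I would check that $\psi$ and $g$ satisfy the hypotheses of Lemmas \ref{lCau1} and \ref{lCau2}: $\psi$ is a Carathéodory function, nonnegative, increasing in the second variable, vanishing for $a\le0$, locally Lipschitz in $a$ away from $0$, and $g\in L^1(0,T)$ because $f\in\dual$, $H\in L^1(\Omega_T)$ and \eqref{b} hold; $\gamma$ is bounded on $[0,T]$ by \eqref{apr1bis}. Lemma \ref{lCau1} then produces the unique solution $x$ of \eqref{c4} (resp. \eqref{c5}) with $x(0)=\gamma(0)=\|u_0\|^2_{L^2(\Omega)}$ and $\gamma(t)\le x(t)$ for all $t$, which is exactly the claimed estimate; uniqueness of $x$ follows from the local Lipschitz property in Lemma \ref{lCau2}. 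The main obstacle is the absorption step leading to \eqref{step2} done with a constant $c_0$ that is uniform in $t_2-t_1$ (so that it genuinely yields the integral form \eqref{gininq} on every subinterval, not just on $(0,T)$), and the careful handling of the two cases $p>2$ and $p\le2$ when passing from $\int|\nabla u|^p$ to a function of $\gamma$ — in the subcritical regime $p<2$ one must exploit the uniform $L^2$-bound to linearize, and it is there that the dependence of $M_2$ on $|\Omega|$ and $\mathscr D_b$ enters.
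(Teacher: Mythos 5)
Your proposal is correct and takes essentially the same route as the paper: test with $u$ on subintervals $(t_1,t_2)$ to get the energy identity, use coercivity \eqref{1.3} and the absorption argument of Proposition \ref{mprop} (splitting $b$ via the truncation and invoking \eqref{dist}) to arrive at the integral inequality \eqref{5.5}, convert $\int|\nabla u|^p$ into a power of $\gamma(s)=\|u(s)\|^2_{L^2(\Om)}$ via Sobolev on slices and Jensen (or equivalently Poincar\'e + H\"older on the bounded domain, as you do for $p>2$), linearize using the uniform bound $\sup_t\gamma(t)\le\Lambda_0$ when $p\le2$, and close with Lemmas \ref{lCau1}--\ref{lCau2}. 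The only cosmetic difference is that the paper passes through $\|u\|_{L^{p^*}}$ and the normalized Jensen inequality $(\dashint|u|^2)^{p^*/2}\le\dashint|u|^{p^*}$ (valid since $p>2N/(N+2)$ gives $p^*>2$) to obtain the $\gamma^{p/2}$ term for all $p$ at once, then specializes; your split into the two cases before applying Sobolev reaches the same inequalities.
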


\begin{proof}
We test the equation in \eqref{mp} by the solution $u$ itself. We get the following energy equality
\[
\frac 1 2
\|u(t)\|^2_{L^2(\Om)}   + \int_{\Om_t} A(x,s,u,\nabla u)\cdot \nabla u \dx x \dx s = \frac 1 2  \|u_0\|^2_{L^2(\Om)} 
+
\int_0^t \langle f , u \rangle \dx s
\]
We write 
down such equality first for $t=t_1$ and subsequently for $t=t_2$, we  subtract 
the relations obtained in this way and we deduce
\[
\frac 1 2 \|u(t_2)\|^2_{L^2(\Om)} - \frac 1 2  \|u(t_1)\|^2_{L^2(\Om)} +\int_{t_1}^{t_2} \int_\Om A(x,s,u,\nabla u)\cdot \nabla u \dx x \dx s =
\int_{t_1}^{t_2} \langle f , u \rangle \dx s
\]
Using \eqref{1.3} we get
\[
\begin{split}
\frac 1 2\|u(t_2)\|^2_{L^2(\Om)} -\frac 1 2  &  \|u(t_1)\|^2_{L^2(\Om) }   +\alpha \int_{t_1}^{t_2} \int_\Om |\nabla u|^p   \dx x \dx s \\\\ & \le
\int_{t_1}^{t_2} \int_\Om (b(x,s)  |u|)^p   \dx x \dx s 
+
\int_{\Om \times (t_1,t_2) } H(x,s)    \dx x \dx s 
+
\int_{t_1}^{t_2} \langle f , u \rangle \dx s
\end{split}
\]
By Young inequality the latter inequality implies 
\begin{equation}\label{yas}
\begin{split}
&
\frac 1 2\|u(t_2)\|^2_{L^2(\Om)}  -  \frac 1 2     \|u(t_1)\|^2_{L^2(\Om) }
 +\alpha 
\|
\nabla u
\|^p_{L^p(\Om \times (t_1,t_2) )}
 \\  & \le
\left\|
b|u|
\right\|^p_{L^p(\Om \times (t_1,t_2) )}
+
\left\|
H
\right\|_{L^1(\Om \times (t_1,t_2) )} 
+
C(\varepsilon)\|
f
\|^{p^\prime}_{L^{p^\prime} \left( t_1,t_2. W^{-1,p^\prime}(\Om)\right) }
+
\varepsilon
\|
\nabla u
\|^p_{L^p(\Om \times (t_1,t_2) )}
\end{split}
\end{equation}
From 
Proposition \ref{mprop} (applied for $\lambda=1$)
 it is clear that $\|\nabla u\|_{L^p(\Om_T)}$ is controlled by a quantity depending only on the data, so 
H\"older and  Sobolev inequalities applied slice--wise give us 
$$
\left\|
b|u|
\right\|^p_{L^p(\Om \times (t_1,t_2) )} \le C \int_{t_1}^{t_2} \|b(\cdot,s)\|_{L^{N,\infty} (\Om)}\dx s
$$
Taking into accout all the above relations,  finally
we get
\begin{equation}\label{5.5}
\begin{split}
\|u(t_2)\|^2_{L^2(\Om)} - &  \|u(t_1)\|^2_{L^2(\Om) }   + C  \int_{t_1}^{t_2} \int_\Om |\nabla u|^p   \dx x \dx s \le C_0
 \int_{t_1}^{t_2  }  g(s) \dx s   
\end{split}
\end{equation}
for some constants $C,C_0$ depending on the data.
By 
Sobolev inequality   
we have
\begin{equation}\label{5.1b}
\begin{split}
\|u(t_2)\|^2_{L^2(\Om)} - &  \|u(t_1)\|^2_{L^2(\Om) }   + C  \int_{ t_1 } ^{t_2  } 
\| u\|^p_{L^{p^\ast}(\Omega)}   \dx x \dx s \le
C_0
 \int_{t_1}^{t_2  }  g(s) \dx s    
\end{split}
\end{equation}
Since
$p>2N/(N+2)$ is equivalent to
$p^\ast>2$, then  
\[
\left(\average \Om {|u(\cdot,s)|^2} x \right)^{p^\ast/2} \le 
 \average \Om {|u(\cdot,s)|^{p^\ast}} x  
\]
for a.e. $s\in (t_1,t_2)$.
Hence, from  
\eqref{5.1b}
\begin{equation}\label{5.1bb}
\begin{split}
\|u(t_2)\|^2_{L^2(\Om)} - &  \|u(t_1)\|^2_{L^2(\Om) }   + 
C
\int_{t_1}^{t_2}
\|u(s)\|^{p }_{L^2(\Om)}
\dx s  
\le 
C_0
 \int_{t_1}^{t_2  }  g(s) \dx s   
\end{split}
\end{equation}
If 
$2N/(N+2) <p \le 2$, then by
\eqref{5.5} we have
\begin{equation}\label{5.5b}
\begin{split}
\|u(t )\|^2_{L^2(\Om)}       \le
\Lambda_0:=
\left(
  \|u_0\|^2_{L^2(\Om) }
+
C_0
 \int_{0}^{T}  g(s) \dx s   
\right)
\end{split}
\end{equation}
In turn, we have
\[
\|u(t )\|^{ p }_{L^2(\Om)}   
=
\frac
{\|u(t )\|^2_{L^2(\Om)}   }
{
\|u(t )\|^{2-p }_{L^2(\Om)}   
}
\ge 
\frac
{\|u(t )\|^2_{L^2(\Om)}   }
{
\Lambda_0^{\frac{2-p } 2}
}
\]
and therefore from  \eqref{5.1bb}
\begin{equation}\label{5.1t}
\begin{split}
\|u(t_2)\|^2_{L^2(\Om)} - &  \|u(t_1)\|^2_{L^2(\Om) }   + 
C
\int_{t_1}^{t_2}
\|u(s)\|^2 _{L^2(\Om)}
\dx s  
\le 
C_0
 \int_{t_1}^{t_2  }  g(s) \dx s   
\end{split}
\end{equation}
Finally, our claimed result follows directly by Lemma \ref{lCau1} where we choose
$t_0=0$,
$\phi(t):=\|u(t)\|^2_{L^2(\Om)}$
and
$\psi(t,y)\equiv\psi(y):= M |y|^{1+\nu}$ for $M$ positive constant and $\nu=p/2 - 1$ for $p >2$ and $\nu=0$ for $2N/(N+2)<p\le 2$.
%
%
\end{proof}
As a byproduct of previous result, we are able to show that the $L^2(\Om)$--norm of any solution
to problem \eqref{mp}
 decays as an explicit negative power of the time variable and exponentially fast respectively in case
$p>2$ and $2N/(N+2)<p \le 2$
by using Lemma \ref{lCau2}.   
\begin{corollary}\label{finalcor}
If  $u$ is a solution 
to
problem \eqref{mp}, then
 for any 
$t \in  [0,T]$ 
\begin{align}
\label{5.12}
\|u(t)\|^2_{L^2(\Omega)} & \le   \frac
{\|u_0\|^2_{L^2(\Omega)}}
{\left[
1+\left(\frac p 2 -1\right) M_1
 {\|u_0\|^{p   -2}_{L^2(\Omega)}}  
  t  
\right]^{\frac 2 {p-2} } }
+C_0  \int_{0}^{t} g(s) \, ds
\qquad\text{if $p>2$}
\\
\label{5.13}
\|u(t)\|^2_{L^2(\Omega)} &  \le \|u_0\|^2_{L^2(\Omega)} e^{-M_2 t } +C_0
\int_0^t e^{-M_2(t-s)} g(s) \dx s
\qquad\text{if $\frac{2N}{N+2}<p\le2$}
\end{align}
Here $C_0>0$ and $M_i>0, \,i=1,2,$ are  positive constants depending 
on $N$, $p$, $\alpha$, 
$ |\Omega |$ and
$
\mathscr  D_b.
$
%
%
%
%
\end{corollary}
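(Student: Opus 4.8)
The plan is to deduce Corollary \ref{finalcor} from Theorem \ref{asyt1} by solving explicitly (or estimating) the scalar Cauchy problems \eqref{c4} and \eqref{c5}. By Theorem \ref{asyt1} we already know $\|u(t)\|_{L^2(\Omega)}^2 \le x(t)$ where $x$ solves the relevant ODE, so it suffices to produce the stated closed-form upper bounds for $x(t)$ itself.

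For the case $p \le 2$, equation \eqref{c5} is linear, $\dot x(t) = -M_2 x(t) + C g(t)$ with $x(0) = \|u_0\|_{L^2(\Omega)}^2$, and the variation-of-constants formula gives $x(t) = \|u_0\|_{L^2(\Omega)}^2 e^{-M_2 t} + C \int_0^t e^{-M_2(t-s)} g(s)\dx s$ exactly; renaming $C = C_0$ yields \eqref{5.13} immediately. For the case $p > 2$, I would split the ODE \eqref{c4} using Lemma \ref{lCau2}: since $g \ge 0$, the solution $x$ is dominated by $z(t) + C_0\int_0^t g(s)\dx s$, where $z$ solves the homogeneous problem $\dot z = -M_1 z^{p/2}$, $z(0) = \|u_0\|_{L^2(\Omega)}^2$. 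This separable equation integrates to $z(t)^{-(p/2-1)} - z(0)^{-(p/2-1)} = (p/2 - 1)M_1 t$, i.e. $z(t) = \|u_0\|_{L^2(\Omega)}^2\big[1 + (p/2 - 1)M_1\|u_0\|_{L^2(\Omega)}^{p-2} t\big]^{-2/(p-2)}$, which is precisely the first term on the right-hand side of \eqref{5.12}. Combining with the $C_0\int_0^t g$ term from Lemma \ref{lCau2} completes the bound.

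The one point requiring a little care is that Lemma \ref{lCau2} as stated applies to the "$\gamma$" in the Gronwall inequality and produces the splitting $\gamma(t) \le x(t) \le z(t) + \int_{t_0}^t g(s)\dx s$ for the $x$ of Lemma \ref{lCau1}; here I am instead directly manipulating the solution of the model ODE \eqref{c4}. The cleanest route is to observe that $x$ solving \eqref{c4} satisfies the differential inequality $\dot x \le -M_1 x^{p/2} + Cg$ with $x \ge 0$ (nonnegativity because $g \ge 0$ and $\psi(t,a) = M_1 a^{p/2}$ vanishes at $a = 0$, handled as in Lemma \ref{lCau2}), hence a comparison argument against the pure-decay solution $z$ and the pure-source solution gives $x(t) \le z(t) + C_0\int_0^t g(s)\dx s$. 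One should verify that $\psi(y) = M_1 y^{p/2}$ meets the local-Lipschitz hypothesis of Lemma \ref{lCau2} on intervals $[r,R] \subset (0,\infty)$, which is clear since $y \mapsto y^{p/2}$ is $C^1$ there, so the decomposition lemma legitimately applies.

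The main (very mild) obstacle is thus purely bookkeeping: making sure the constants $M_1, M_2, C_0$ from Theorem \ref{asyt1} are carried through unchanged and depend only on $N, p, \alpha, |\Omega|, \mathscr D_b$ (the $\beta$-dependence drops out because in the energy estimate \eqref{5.5} only the coercivity constant $\alpha$ and the lower-order data enter, the growth bound \eqref{1.5} having been used only to control $\|\nabla u\|_{L^p(\Omega_T)}$ via Proposition \ref{mprop}, which is already absorbed). No genuine analytic difficulty arises beyond integrating the separable model equation and invoking Lemmas \ref{lCau1} and \ref{lCau2}.
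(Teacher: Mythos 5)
Your proposal is correct and follows essentially the same route as the paper: for $p\le2$ the paper likewise just solves the linear Cauchy problem \eqref{c5} by variation of constants, and for $p>2$ the paper invokes (by citation to \cite{Moscariello-Frankowska}) precisely the splitting $x(t)\le z(t)+C_0\int_0^t g(s)\,\mathrm ds$ with $z$ the solution of the homogeneous separable ODE, which is the content of Lemma \ref{lCau2} that you reconstruct and integrate explicitly. Your additional remarks on the local Lipschitz hypothesis for $y\mapsto M_1 y^{p/2}$ and the bookkeeping of constants are sound but not needed beyond what the cited lemmas already provide.
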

\begin{proof}
We set again
$\phi(t):=\|u(t)\|^2_{L^2(\Om)}$. It had been already observed in 
\cite{Moscariello-Frankowska}
that if $p>2$ the function $x(\cdot)$ of Lemma \ref{lCau1} satisfies
\[x(t)\le y(t):=\frac{\phi(0)}
{ \left[
1+\nu M
\phi(0) ^\nu
 t  
\right]^{1/\nu} } + C_0 \int_{ 0}^t g(s)\dx s \]
choosing $\nu=p/2 -1$. 
The estimate \eqref{5.13}
follows by solving the Cauchy problem 
\eqref{c5}.
\end{proof}
Now we are able to describe the asymptotic behavior of a solution. 
Assume that conditions
\eqref{1.3} 
and \eqref{1.5},  hold 
and 
 \eqref{dist} hold  true 
for $t\in(0, \infty)$. 
\begin{proposition}
Let 
$ u \in C_{\rm loc}^0 \left(
[0,\infty),L^2(\Om)
\right)\cap L_{\rm loc}^p\left(
0,\infty,W^{1,p}_0 (\Omega)
\right)$. Then 
\begin{align}
\label{5.14}
\|u(t)\|^2_{L^2(\Omega)} &  \le
\left[
\left(\frac p 2 -1\right) M_1
\right]^{-\frac 2 {p-2}}
t ^{-2/(p -2)}  
+  C_0 \int_{t/2}^{t} g(s) \, ds
\quad
  \text{if $p>2$} 
\\
\label{5.15}
\|u(t)\|^2_{L^2(\Omega)}  & \le \left( {\|u_0\|^2_{L^2(\Omega)}} 
+C_0 \|g\|_{L^1([0,\infty))}
\right)
e^{- \frac {M_2} 2  t   } 
+C_0
\int_{t/2}^t e^{-\frac{M_2} 2 (t-s)} g(s) \dx s
%
%
%
\quad
  \text{if $\frac{2N}{N+2}<p\le2$} 
\end{align} 
%
%
provided $g(t)$ is integrable in $[0,\infty)$.
Here $C_0>0$ and $M_i>0, \,i=1,2,$ are  positive constants depending 
on $N$, $p$, $\alpha$, 
$ |\Omega |$ and
$
\mathscr  D_b.
$
%
%
\end{proposition}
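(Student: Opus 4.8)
The plan is to derive both estimates from Corollary \ref{finalcor} by a time–translation (``restart'') argument. The differential inequalities \eqref{5.5}, \eqref{5.1bb}, \eqref{5.1t} underlying Theorem \ref{asyt1}, and hence the comparison furnished by Lemma \ref{lCau1} and Lemma \ref{lCau2}, hold on \emph{every} subinterval of $[0,\infty)$ with constants depending only on $N,p,\alpha,\beta,|\Omega|,\mathscr D_b$; here $g\in L^1(0,\infty)$ enters to keep $\sup_{t>0}\|u(t)\|_{L^2(\Omega)}$ finite via \eqref{5.5b}. Consequently I may apply Corollary \ref{finalcor} not only to $u$ on $[0,T]$ but also to the translated solution $v(s):=u(s+t/2)$, which solves a problem of the same structure on $[0,t/2]$ (the same $\alpha,\beta$, and $\mathscr D_{\widetilde b}\le\mathscr D_b$, so \eqref{dist} still holds), with datum $v(0)=u(t/2)\in L^2(\Omega)$ and forcing $\widetilde g(s)=g(s+t/2)$.

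For $p>2$, applying \eqref{5.12} to $v$ at time $s=t/2$ gives
\[
\|u(t)\|^2_{L^2(\Omega)}\le\frac{\|u(t/2)\|^2_{L^2(\Omega)}}{\bigl[1+(\tfrac p2-1)M_1\|u(t/2)\|^{p-2}_{L^2(\Omega)}\tfrac t2\bigr]^{2/(p-2)}}+C_0\int_{0}^{t/2}\widetilde g(s)\,ds .
\]
Changing variables in the last integral turns it into $C_0\int_{t/2}^{t}g(s)\,ds$, while dropping the ``$1$'' in the bracket bounds the first term by $\bigl[(\tfrac p2-1)M_1\tfrac t2\bigr]^{-2/(p-2)}$, which is independent of $\|u(t/2)\|_{L^2(\Omega)}$; after renaming $M_1$ to absorb the factor $2^{2/(p-2)}$ this is precisely \eqref{5.14}. (Note that \eqref{5.12} itself already yields an $\|u_0\|$–free bound on its first term; the point of the restart is to replace $C_0\int_0^t g$ by the genuine tail $C_0\int_{t/2}^t g$.)

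For $\frac{2N}{N+2}<p\le2$ no restart is needed: I start from the linear estimate \eqref{5.13} applied to $u$ on $[0,t]$ and split $\int_0^t=\int_0^{t/2}+\int_{t/2}^t$. On $[0,t/2]$ one has $e^{-M_2(t-s)}\le e^{-M_2 t/2}$, so $C_0\int_0^{t/2}e^{-M_2(t-s)}g(s)\,ds\le C_0\|g\|_{L^1([0,\infty))}\,e^{-M_2 t/2}$; combining this with $\|u_0\|^2_{L^2(\Omega)}e^{-M_2 t}\le\|u_0\|^2_{L^2(\Omega)}e^{-M_2 t/2}$ and bounding $e^{-M_2(t-s)}\le e^{-\frac{M_2}{2}(t-s)}$ in the residual integral yields \eqref{5.15}. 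The only step that genuinely requires care is the bookkeeping in the first case: one must check that the constants $M_1,M_2,C_0$ delivered by Corollary \ref{finalcor} do not deteriorate under the restart, which holds precisely because they are controlled by the data alone and not by $T$ nor by the size of the initial value; everything else is a substitution.
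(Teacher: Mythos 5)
Your proof is correct, and the restart argument is well executed. The one step worth spelling out is the algebraic cancellation at the heart of the $p>2$ case: after dropping the $1$ in the bracket of \eqref{5.12} applied to $v(s)=u(s+t/2)$, one has
\[
\frac{\|u(t/2)\|^2_{L^2(\Omega)}}{\bigl[(\tfrac p2-1)M_1\|u(t/2)\|^{p-2}_{L^2(\Omega)}\tfrac t2\bigr]^{2/(p-2)}}
=\Bigl[(\tfrac p2-1)M_1\tfrac t2\Bigr]^{-2/(p-2)},
\]
which indeed eliminates $\|u(t/2)\|_{L^2(\Omega)}$ entirely, and a renaming $M_1\mapsto M_1/2$ matches \eqref{5.14} exactly; you flag this clearly. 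Your justification of the restart — that (1.3), (1.5), (1.7) and the constants $M_1,M_2,C_0$ of Corollary \ref{finalcor} are stable under time shifts because they depend only on $N,p,\alpha,|\Omega|,\mathscr D_b$ and not on the size of the initial datum or the horizon — is exactly what is needed, and the $p\le2$ case is handled by the correct elementary splitting of the Duhamel integral in \eqref{5.13}.

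The paper itself does not give a proof and only refers to Propositions~3.1 and 3.2 of Moscariello--Porzio, which employ precisely this shift-by-$t/2$ technique; so your argument is essentially the paper's intended one, with the small bonus that you observe the restart is unnecessary for $p\le2$. If you want the write-up to be airtight for $p>2$, you might add one sentence stating explicitly that $v\in C^0([0,t/2],L^2(\Omega))\cap L^p(0,t/2,W^{1,p}_0(\Omega))$ and that $v$ satisfies the weak formulation \eqref{ds} on $[0,t/2]$ with datum $u(t/2)$, so that Corollary \ref{finalcor} applies verbatim; but this is a formality given that the energy relation underlying Theorem \ref{asyt1} is already written on arbitrary subintervals $[t_1,t_2]$.
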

The proof follows the same lines as in Proposition 3.1 and Proposition 3.2 of 
\cite{Moscariello-Porzio}. 
\begin{remark}  
We remark that when $p>2$ previous proposition provides universal estimate  on time, in the sense that the estimate does not depend on the initial datum $u_0$.
\end{remark}  

\begin{remark}   
The statement of Theorem 
\ref{asyt1} 
 improves the behavior
in time of a solution to problem \eqref{mp} 
known so far  \cite{Moscariello-Frankowska,Moscariello-Porzio}   for $p$--Laplace operator and see  also \cite{Far,FarMos} for $p=2$. 
\end{remark}

\end{document}